\newcommand{\comment}[1]{}
\newcommand{\clc}{,\ldots,}
\newcommand{\smallcaps}[1]{\textrm{\textsc{#1}}}
\newcommand{\brac}[1]{\left( #1 \right)}
\newcommand{\bbrac}[1]{\bigl( #1 \bigr)}
\newcommand{\cbrac}[1]{\left\{ #1 \right\}}
\newcommand{\sbrac}[1]{\left[ #1 \right]}
\newcommand{\mbrac}[1]{\left| #1 \right|}
\newcommand{\sym}{\mathbb{S}}
\newcommand{\sgn}{\text{sgn}}
\newcommand{\As}{\text{As}}
\newcommand{\Lie}{\text{Lie}}
\newcommand{\PLie}{\text{PLie}}
\newcommand{\PostLie}{\text{PostLie}}
\newcommand{\ComTrias}{\text{ComTrias}}
\newcommand{\Dend}{\text{Dend}}
\newcommand{\Zinb}{\text{Zinb}}
\newcommand{\Nil}{\text{Nil}}
\newcommand{\Mag}{\text{Mag}}
\newcommand{\Com}{\text{Com}}
\newcommand{\Perm}{\text{Perm}}
\newcommand{\coAs}{\text{coAs}}
\newcommand{\coLie}{\text{coLie}}
\newcommand{\coLeib}{\text{coLeib}}
\newcommand{\coPLie}{\text{coPLie}}
\newcommand{\coCom}{\text{coCom}}
\newcommand{\coPerm}{\text{coPerm}}
\newcommand{\coDias}{\text{coDias}}
\newcommand{\susp}{\Sigma}
\newcommand{\quis}{\simeq}
\newcommand{\antis}{{\text{!`}}}
\newcommand{\cc}{\circ}
\newcommand{\cco}{\circ_1\!}
\newcommand{\co}{\bullet}
\newcommand{\ca}{\ast}
\newcommand{\lieg}{\mathfrak{g}}
\newcommand{\operadscript}[1]{\mathcal{#1}}
\newcommand{\CO}{{\operadscript{O}}}
\newcommand{\CP}{{\operadscript{P}}}
\newcommand{\CQ}{{\operadscript{Q}}}
\newcommand{\CM}{{\operadscript{M}}}
\newcommand{\CN}{{\operadscript{N}}}
\newcommand{\CA}{{\operadscript{A}}}
\newcommand{\CB}{{\operadscript{B}}}
\newcommand{\CC}{{\operadscript{C}}}
\newcommand{\CD}{{\operadscript{D}}}
\newcommand{\CF}{{\operadscript{F}}}
\newcommand{\CI}{{\operadscript{I}}}
\newcommand{\CJ}{{\operadscript{J}}}
\newcommand{\CR}{{\operadscript{R}}}
\newcommand{\CS}{{\operadscript{S}}}
\newcommand{\CT}{{\operadscript{T}}}
\newcommand{\CV}{{\operadscript{V}}}
\newcommand{\CW}{{\operadscript{W}}}
\newcommand{\CX}{{\operadscript{X}}}
\newcommand{\CY}{{\operadscript{Y}}}
\newcommand{\barCO}{\overline{\CO}}
\newcommand{\LL}{\mathbb{L}}
\newcommand{\LLO}{\LL_\CO}
\newcommand{\LLP}{\LL_\CP}
\newcommand{\UU}{U}
\newcommand{\Uf}{\UU_f}
\newcommand{\UO}{\UU_\CO}
\newcommand{\UP}{\UU_\CP}
\newcommand{\finbij}{\text{Fin}_{\text{bij}}}
\newcommand{\SM}{\text{$\sym$-\smallcaps{mod}}}
\newcommand{\Cyc}{\text{Cyc}}
\DeclareMathOperator{\Hom}{Hom}
\DeclareMathOperator{\id}{Id}
\DeclareMathOperator{\gr}{gr}
\theoremstyle{plain}
\newtheorem{theorem}{Theorem}[section]
\newtheorem{lemma}[theorem]{Lemma}
\newtheorem{proposition}[theorem]{Proposition}
\newtheorem{corollary}[theorem]{Corollary}
\theoremstyle{remark}
\newtheorem{remark}[theorem]{Remark}
\newtheorem{definition}[theorem]{Definition}
\newtheorem{example}[theorem]{Example}
\title{Operadic comodules and (co)homology theories}
\author{James T. Griffin}
\begin{document}
\begin{abstract}
An operad describes a category of algebras and a (co)homology theory for these algebras may be formulated using the homological algebra of operads.
A morphism of operads $f:\CO\rightarrow\CP$ describes a functor allowing a $\CP$-algebra to be viewed as an $\CO$-algebra.
We show that the $\CO$-algebra (co)homology of a $\CP$-algebra may be represented by a certain operadic comodule.
Thus filtrations of this comodule result in spectral sequences computing the (co)homology.

As a demonstration we study operads with a filtered distributive law; for the associative operad we obtain a new proof of the Hodge decomposition of the Hochschild cohomology of a commutative algebra.
This generalises to many other operads and as an illustration we compute the post-Lie cohomology of a Lie algebra.
\end{abstract}

\maketitle
\section{Introduction}

Hochschild homology and cohomology is defined for associative algebras; when the algebra,~$A$ is commutative and defined over a characteristic~0 field, the (co)homology splits into pieces,
\[
H\!H_n(A) \cong H\!H_n^{(1)}(A) \oplus \ldots \oplus H\!H_n^{(n)}(A).
\]
The first term is isomorphic to the Harrison homology and this splitting was obtained by Barr~\cite{Barr1968}, the full decomposition follows from results of Quillen~\cite{Quillen1970} and has also been described by Gerstenhaber and Schack~\cite{Gerstenhaber1987}.
It has been extended to cyclic (co)homology~\cite{Loday1989},~\cite{Natsume1989} and to $C_\infty$-algebras~\cite{Hamilton2004}.
In this paper we explain the Hodge decomposition using the language of operads.
This is accomplished by a general theory, in which (co)homology theories for algebras defined over a Koszul operad are studied using comodules defined over the Koszul dual cooperad.
The foundational results are Lemmas~\ref{lemma_barlemma} and~\ref{lemma_LLM}.

An operad $\CP$ describes a category of $\CP$-algebras and a morphism of operads $f:\CO\rightarrow\CP$ describes how a $\CP$-algebra $A$ restricts to an $\CO$-algebra, denoted $f^\ast A$.
For instance there is a morphism $\pi:\As\rightarrow\Com$ between the operads describing associative and commutative associative algebras.
The morphism $\pi$ merely observes that an algebra with a commutative associative product is also an associative algebra with that same product.
Another familiar example $i:\Lie\rightarrow\As$ describes how an associative algebra $A$ is naturally a Lie algebra $i^\ast A$ with the bracket $[a,b] = ab - ba$ for $a,b\in A$.
The problem we address is:
\begin{quote}
If $f:\CO\rightarrow\CP$ is a morphism of operads and $A$ is a $\CP$-algebra, what can be said about the $\CO$-algebra homology and cohomology of~$f^\ast A$?
\end{quote}
The homology and cohomology theories of $\CO$-algebras were introduced in~\cite{Ginzburg1994}.  
For an excellent treatment see~\cite{Loday2012} which also gives an introduction to algebraic operads.
Our most general answer to this problem is provided by Theorem~\ref{thm_cohomologyhodge}.

We restrict our attention to operads which are Koszul; a property held by the majority of operads of interest.
In this case the homology and cohomology theories for $\CO$ can be described in terms of the Koszul dual cooperad $\CO^\antis$, see~\cite{Ginzburg1994}.
After introducing the required theory, we will show that for each $\CO$-algebra~$A$ there is a functor from the category of comodules over $\CO^\antis$ to the category of chain complexes; under this functor the (co)homology of~$A$ is represented by~$\CO^\antis$ viewed as a comodule over itself.
In the event that $A\cong f^\ast B$ for a $\CP$-algebra $B$ and for an operad map $f:\CO\rightarrow\CP$, the $\CO$-algebra (co)homology of~$A$ is represented by $\CO^\antis$ viewed as a comodule over~$\CP^\antis$.
Thus a decomposition of (co)homology groups may be obtained from a decomposition of $\CP^\antis$-comodules.  
This is precisely the situation for the map $\pi:\As\rightarrow\Com$ where the Koszul dual $\As^\antis\cong\Sigma\coAs$ decomposes as a comodule over~$\Com^\antis\cong\Sigma\coLie$, inducing the Hodge decomposition of Hochschild (co)homology for a commutative algebra.
Often instead of a decomposition we find a filtration of comodules resulting in a spectral sequence for the (co)homology.

To apply these results we require knowledge of the comodule theories associated to cooperads, or equivalently the module theories associated to operads.
First, there is a subtlety in the definition of left module that must be addressed;
there are two variations, the linear version that we require and a non-linear version which is equivalent to the definition of an algebra in the category of $\sym$-modules.
Most previous results on left modules have used this second definition, for instance the result that a free pre-Lie algebra is free as a Lie-algebra~\cite{Chapoton2010} is proved by showing that the pre-Lie operad itself is a free Lie algebra.
There is no ambiguity for right modules and their properties and associated functors have been studied by Fresse~\cite{Fresse2009}.
There are a number of tools available for the study of modules over operads:
\begin{itemize}
\item One may work directly with generators and relations, an approach that has been furthest developed by Dotsenko and Khoroshkin~\cite{Dotsenko2010grobner}, see also~\cite{Dotsenko2011freeness}.
\item One may combinatorially construct endomorphisms of modules which may then be used to decompose the module via the endomorphism's eigenvalues, we use this approach in Section~\ref{section_equivalence}.
\item One may use the Hadamard product with an operad $\CP$ to transfer structure from $\CO$-modules to $\CO\otimes\CP$-modules.  This works well when $\CP$ is the operad for permutative algebras, see~\cite{Chapoton2001endo}, and we use this technique in Section~\ref{section_dendzinb}.
\item Finally in the case of an operad described using a filtered distributive law~\cite{Dotsenko2011fdls}, one may obtain a strong description of the module theory.
This is the main method used in this article and we develop the theory in Section~\ref{section_fdls}.
\end{itemize}

An operad $\CO$ with a filtered distributive law has two operad maps $i:\CQ \hookrightarrow \CO$ and $f:\CO\rightarrow\CP$, see~\cite{Dotsenko2011fdls}.
We describe two operad filtrations on $\CO$, one positive and one negative, with isomorphic associated graded operads (after removal of the gradings).
However the positive filtration yields a filtration of $\CO$ viewed as a $\CQ$-bimodule, while the negative filtration yields a filtration of $\CO$ viewed as a bimodule over itself.
The Koszul dual of $\CO$ also satisfies a filtered distributive law and thus there are two filtrations of $\CO^\antis$ as a comodule, over~$\CP^\antis$ and over~$\CO^\antis$ itself.
The first gives a spectral sequence for the $\CO$-algebra (co)homology of $f^\ast A$ for a $\CP$-algebra~$A$.
The second gives a spectral sequence for the $\CO$-algebra (co)homology of any $\CO$-algebra~$B$ where the first page is given solely in terms of the $\CQ$-algebra structure $i^\ast B$.
We give conditions when the first spectral sequence collapses; this includes the case of $\pi:\As\rightarrow\Com$ in characteristic $0$ and when $A$ is a free $\CP$-algebra.
In this second case, if \(A = \CP\circ V\) is a free \(\CP\)-algebra and \(M\) some \(A\)-module, then the cohomology of \(A\) with coefficients in \(M\) is isomorphic to \(\Hom_k(\CQ^\antis\circ V, M)\), see Theorem~\ref{thmFree} for further details.

As an example we take the operad morphism $f:\PostLie\rightarrow\Lie$. 
A post-Lie algebra formalises the algebra of vector fields with a flat connection with constant torsion~\cite{MuntheKaas2013}.
We show that the (co)homology of a Lie algebra $\mathfrak{g}$ viewed as a post-Lie algebra $f^\ast\mathfrak{g}$ with zero connection splits into two pieces and we calculate these pieces in terms of the Lie algebra (co)homology of $\mathfrak{g}$.

We finish by treating the morphism $\pi:\As\rightarrow\Com$, proving that our methods recover the classical Hodge decomposition of the Hochschild (co)complex.
We also treat the morphism $\pi':\Dend\rightarrow\Zinb$ recovering a decomposition of the dendriform (co)homology of a zinbiel algebra as observed in~\cite{Goichot}.

This paper may be divided into two parts of approximately equal length.
The first is concerned with (co)homology theories for algebras over Koszul operads and with how they relate to categories of operadic comodules.
It consists of three sections: in Section~\ref{section_background} we give some background theory on operads;
in Section~\ref{section_barhomology} we describe our theory in the easier case of the bar homology of an algebra and cover cyclic homology;
in Section~\ref{section_cohomology} we move onto (co)homology in terms of the cotangent complex and prove the main result, Theorem~\ref{thm_cohomologyhodge}.

The second part supplies examples of decompositions and filtrations of operadic modules.
The main results are in Section~\ref{section_fdls} which develops the module theory associated to filtered distributive laws;
finishing with Section~\ref{section_postlie} treating the specific example of a Lie algebra viewed as a post-Lie algebra;
then the final Section~\ref{section_equivalence} proves that the standard definition of the Hodge decomposition of the Hochschild (co)homology of a commutative algebra agrees with the definition using a filtered distributive law.

\section{Background on operads}
\label{section_background}
In this section we will review some existing constructions and their most important properties.  
The background on cohomology theories (and operads in general) can be found in the self-contained book~\cite{Loday2012}, an original reference is~\cite{Ginzburg1994}.
For some background on the theory of right modules over operads see~\cite{Fresse2009}.
Throughout we will work with differentially graded modules over a field $k$, these form a symmetric monoidal category, $(\mathcal{E}, \otimes, k)$.
The differential on the tensor product of two dg-modules is given using the usual Koszul sign convention.
The suspension of a dg-module $M$ is denoted $sM$, so $(sM)_i = M_{i-1}$.

An $\sym$-module is a functor from the category $\finbij$ of finite sets and bijections to the category of dg-modules $\mathcal{E}$.
The category of $\sym$-modules (with natural transformations as morphisms) is denoted $\SM$; this is a monoidal category with the \emph{composition product} $\circ$.  
The composition product of two $\sym$-modules $\CA$ and $\CB$ is defined by
\begin{equation}\label{eq_compositionproduct}
\CA \circ \CB (X) = \bigoplus_{r\geq 1} \CA(\cbrac{1\clc r}) \otimes_{\sym_r}\bigoplus_{X = X_1\amalg\ldots\amalg X_r} \bigotimes_{i=1}^r \CB(X_i),
\end{equation}
where the second direct sum is indexed by partitions of~$X$ into~$r$ subsets, some of which could be empty.
The unit $\CI$ is the $\sym$-module that is $0$ except on singleton sets which are taken to the unit $k$ of $\mathcal{E}$.
The category $\SM$ is abelian, however the composition product is not linear in the right term.  To correct for this we define the \emph{infinitesimal composition}.
Let $\CA$, $\CB_1$ and $\CB_2$ be $\sym$-modules, then the infinitesimal composite $\CA\circ (\CB_1;\CB_2)$ is the sub-$\sym$-module of $\CA\circ(\CB_1\oplus\CB_2)$ which is linear in the $\CB_2$ term.
Explicitly this is given by
\[
\CA \circ (\CB_1;\CB_2) (X) = \bigoplus_{r\geq 1} \CA(\cbrac{1\clc r}) \otimes_{\sym_r}\!\! 
\bigoplus_{X = X_1\amalg\ldots\amalg X_r} 
\bigoplus_{j=1\clc r} \CB_2(X_j) \otimes \bigotimes_{i=1, i\neq j}^r \CB_1(X_i).
\]
We define $\CA\cco \CB$ to be $\CA\circ(\CI,\CB)$ and call this the \emph{bilinear composition product}.  Explicitly this is
\[
(\CA\cco\CB)(X) = \bigoplus_{Y\subseteq X} \CA(X/Y)\otimes \CB(Y),
\]
where $X/Y = X\setminus Y\amalg\!\cbrac{p}$, the subset $Y$ is replaced by a single point.
It is important to note that the bilinear composition product is not monoidal since it is not associative.
There is however a natural transformation
\begin{equation}\label{eq_circ1assoc}
\xymatrix{\CA\cco ( \CB\cco \CC) \ar[r] & (\CA\cco\CB)\cco\CC,}
\end{equation}
defined for each triple of $\sym$-modules $\CA$, $\CB$, $\CC$.
Each of these is an injection and the domain splits with cokernel
\begin{equation}\label{eq_circ2}
\CA\circ_2(\CB,\CC) := \bigoplus_{Y_1,Y_2\subseteq X\text{ with }Y_1\cap Y_2=\emptyset}\CA(X/Y_1/Y_2) \otimes \CB(Y_1) \otimes \CC(Y_2),
\end{equation}
where $X/Y_1/Y_2$ is the set given by replacing the subsets $Y_1,Y_2$ of $X$ with a point each.
There are symmetries
\[
\alpha_{\CA,\CB,\CC}:\CA\circ_2(\CB,\CC) \rightarrow \CA\circ_2(\CC,\CB),
\]
for \(\sym\)-modules \(\CA,\CB\) and \(\CC\), and there are isomorphisms
\[
(\CA \circ (\CB_1;\CB_2)) \circ \CC \, \cong \, \CA\circ (\CB_1\circ\CC;\CB_2\circ\CC),
\]
for $\sym$-modules $\CA$, $\CB_2$, $\CB_2$ and $\CC$.  In particular
\[
(\CA \cco \CB)\circ \CC \, \cong \, \CA\circ (\CC;\CB\circ\CC).
\]
Now we treat infinitesimal composites of morphisms.
There are natural maps 
\[
\xymatrix{ \CA \ar@<0.5ex>[r]^(.4){\alpha_\CA} & \CA\circ(\CI;\CI) \ar@<0.5ex>[l]^(.6){\beta_\CA} }
\]
allowing us to relate the composition product with its infinitesimal version.  Note that $\CA\circ(\CI;\CI)(X)$ is isomorphic to
\[
\bigoplus_{x\in X} \CA(X).
\]
The map $\beta_\CA$ evaluates the sum of the terms, whereas the map $\alpha_\CA$ takes an element $a$ to $\sum_{x\in X} a$.
Applying $(-\circ\CB)$ to this diagram we get
\[
\xymatrix{ \CA\circ\CB \ar@<0.5ex>[r]^(.4){\alpha_\CA \circ \id} & \CA\circ(\CB;\CB) \ar@<0.5ex>[l]^(.6){\beta_\CA\circ\id} }
\]
This is used to define the infinitesimal composite of morphisms; let $f:\CA_1\rightarrow\CA_2$ and $g:\CB_1\rightarrow \CB_2$ then $f \circ' g$ is defined as
\[\xymatrix{
\CA_1 \circ \CB_1 \ar[r]^(.4){\alpha_{\CA_1}\circ\id} & 
\CA_1\circ(\CB_1;\CB_1) \ar[r]^{f\circ(\id;g)} &
\CA_2 \circ(\CB_1;\CB_2). }
\]
Now suppose that $g:\CB\rightarrow\CB$ is an endomorphism, then we abuse notation by defining $f\circ' g$ to be 
\[\xymatrix{
\CA_1 \circ \CB \ar[r]^(.4){\alpha_{\CA_1}\circ\id} & \CA_1\circ(\CB;\CB) \ar[r]^{f\circ(\id;g)} &
\CA_2 \circ(\CB;\CB) \ar[r]^(.6){\beta_{\CA_2}\circ\id} & \CA_2\circ\CB.}
\]
So if $g_1,g_2:\CB\rightarrow\CB$ are two endomorphisms then we have the linear relation $f\circ'(g_1+g_2) = f\circ' g_1 + f\circ'g_2$, which certainly does not hold for the standard composition product.
An example of this infinitesimal composite comes from the differentials associated to compositions of $\sym$-modules.
If $\CA=(A,d_A)$ and $\CB=(B,d_B)$ are $\sym$-modules with underlying graded $\sym$-modules $A$ and $B$ then $\CA\circ\CB$ is the $\sym$-module with underlying graded $\sym$-module $A\circ B$ and differential 
\[
d_{A\circ B} = d_A\circ\id_B + \id_A\circ' d_B.
\]
This agrees with the definition~\eqref{eq_compositionproduct} where the differential arises from the tensor product of dg-modules in~$\mathcal{E}$.

One way of suspending an $\sym$-module $\CM$ is to suspend each space, so $(s\CM)(n) = s(\CM(n))$.
There is another suspension functor $\susp$ defined on $\CM$ by $\susp\CM(n) = s^{n-1}\CM(n)$.  An equivalent definition is
\[
\susp\CM = (s^{\!-1\!}\CI) \circ \CM \circ (s\CI).
\]
With this form it is clear that $\susp(\CV\circ\CW) \cong \susp\CV\circ\susp\CW$.

\subsection{Operads and their modules}
As discussed above we have a monoidal category $(\SM, \circ,\CI)$.
Algebras in this monoidal category are $\sym$-modules $\CO$ equipped with maps 
\[\xymatrix{
\CO\circ\CO \ar[r]^(.6){\mu_\CO} & \CO & \text{and} & \CI \ar[r]^{u_\CO} & \CO, }
\]
satisfying the associativity square
\begin{equation}\label{eq_operadassociativity}\xymatrix{
\CO\circ\CO\circ \CO \ar[r]^(.6){\id\circ\mu_\CO} \ar[d]_{\mu_\CO\circ\id} & \CO\circ\CO\ar[d]^{\mu_\CO} \\
\CO\circ\CO \ar[r]_(.55){\mu_\CO} & \CO }
\end{equation}
and the unit triangles
\begin{equation}\label{eq_operadunit}\xymatrix{
\CI\circ\CO\ar[r]^{u_\CO\circ\id} \ar[dr] & \CO\circ\CO\ar[d]^{\mu_\CO} & \CO\circ\CI\ar[l]_{\id\circ u_\CO} \ar[dl] \\
& \CO. & }
\end{equation}
An algebra for the composition product in the category of $\sym$-modules is an~\emph{operad}.

For any pair of $\sym$-modules $\CA$, $\CB$ where $\CB$ is equipped with a map $u:\CI\rightarrow \CB$ there is a natural map from the infinitesimal composition product to the full composition product,
\[\xymatrix{
\CA\cco\CB \ar@{}[r]|(.45)*+{\cong} & \CA\circ(\CI;\CB) \ar[r]^{\id\circ(u;\id)} & \CA\circ(\CB;\CB)\ar[r]^(.55){\beta_\CA\circ\id} & \CA\circ \CB. }
\]
Denote by $\mu^1_\CO$ the restriction of the operad composition map $\mu_\CO$ to the bilinear composition product:
\[\xymatrix{
\CO\cco\CO\ar[r] & \CO\circ \CO \ar[r]^(.55){\mu_\CO} & \CO.}
\]
This still determines the operad structure because although the bilinear composition product is not associative, successive powers $(-\cco\CO)^n (\CO)$ cover $\CO\circ\CO$.
Indeed examples of operads are often defined in terms of these bilinear maps.

We will now address the subject of modules for operads.  
There are the concepts of left and right modules for an operad coming from the monoidal category $(\SM,\circ,\CI)$.  For right modules this is precisely the approach we take.
A \emph{right module for an operad~$\CO$} is an $\sym$-module $\CM$ equipped with a composition map $\mu^R_\CM:\CM\circ\CO \rightarrow \CM$.
This must satisfy a unit triangle and the associativity square
\[\xymatrix{
\CM\circ\CO\circ\CO\ar[r]^(.6){\mu^R_\CM\circ\id} \ar[d]_{\id\circ\mu_\CO} & \CM\circ\CO \ar[d]^{\mu^R_\CM} \\
\CM\circ\CO \ar[r]_(0.55){\mu^R_\CM} & \CM. }
\]
In the same way as for an operad the structure may be defined using a bilinear composition product $\CM\cco\CO\rightarrow \CM$.

However for left modules there is a choice.  We will define a \emph{left module for an operad~$\CO$} to be an $\sym$-module $\CM$ equipped with a map $\mu^L_\CM:\CO\cco\CM\rightarrow \CM$ from the {\bf bilinear} composition product.
This must satisfy the pentagon
\[\xymatrix{
&\CO\cco (\CO\cco \CM) \ar[r]^(.6){\id\circ\mu^L_\CM}\ar[dl] & \CO\cco\CM \ar[dd]^{\mu^L_\CM} \\
(\CO\cco\CO)\cco\CM \ar[d]^{\mu_\CO\cco\id} && \\
\CO\cco\CM \ar[rr]^{\mu^L_\CM} && \CM.
}\]
The top-left diagonal map is a natural inclusion~\eqref{eq_circ1assoc}.
This definition using the bilinear composition product differs substantially from the definition using the regular composition product.
To distinguish between the two versions we make the following definition: if $\CA$ is an $\sym$-module equipped with a composition map $\mu_\CA:\CO\circ\CA\rightarrow\CA$ satisfying a square
\[\xymatrix{
\CO\circ\CO\circ \CA \ar[r]^(.6){\id\circ\mu_\CA} \ar[d]_{\mu_\CO\circ\id} & \CO\circ\CA\ar[d]^{\mu_\CA} \\
\CO\circ\CA \ar[r]_(.55){\mu_\CA} & \CA, }
\]
then we say that $\CA$ is an \emph{$\sym$-module-algebra} for $\CO$.
Most of our examples of $\sym$-module-algebras for $\CO$ will concern $\sym$-modules which are ``concentrated in arity 0'', meaning that they are $\sym$-modules which evaluate as 0 except on the empty set, where they give a dg-module; we refer to these simply as \emph{$\CO$-algebras}.

Finally we address the category of bimodules for an operad $\CO$.
Let $\CM$ be an $\sym$-module equipped with maps $\mu_\CM^R:\CM\cco\CO\rightarrow\CM$ and $\mu_\CM^L:\CO\cco\CM\rightarrow\CM$, which make $\CM$ into a right and a left $\CO$-module respectively.
Then $\CM$ is an \emph{$\CO$-bimodule} if it also satisfies the compatibility pentagon
\[\xymatrix{
&\CO\cco (\CM\cco \CO) \ar[r]^(.6){\id\circ\mu^R_\CM}\ar[dl] & \CO\cco\CM \ar[dd]^{\mu^L_\CM} \\
(\CO\cco\CM)\cco\CO \ar[d]^{\mu^L_\CM\cco\id} && \\
\CM\cco\CO \ar[rr]^{\mu^R_\CM} && \CM,
}\]
and another compatibility diagram
\[\xymatrix{
\CO\circ_2(\CO,\CM) \ar[d]\ar[r] & (\CO\cco\CO)\cco\CM \ar[r]^(.6){\mu_\CO\cco\id} & \CO \cco \CM\ar[r]^(.58){\mu_\CM^L} & \CM\\
\CO\circ_2(\CM,\CO) \ar[r] & (\CO\cco\CM)\cco\CO \ar[r]^(.6){\mu_\CM^L\cco\id} & \CM \cco \CO, \ar[ur]_{\mu_\CM^R} &
}\]
where the unlabelled arrows are natural transformations arising from the category of $\sym$-modules.
Since the product $\cco$ is bilinear all three categories; left modules, right modules and bimodules over~$\CO$ are abelian.  
Hence these categories behave much like the category of modules over some ring.
This is in contrast with the category of $\sym$-module-algebras for $\CO$ which is not abelian.

\subsection{Cooperads, comodules and coalgebras}
A \emph{cooperad} is an $\sym$-module $\CC$ equipped with a comultiplication $\Delta_\CC:\CC\rightarrow \CC\circ\CC$ and a counit $\varepsilon_\CC:\CC\rightarrow \CI$.
It must satisfy a coassociativity square and counit triangles which are given by considering the diagrams~\eqref{eq_operadassociativity} and~\eqref{eq_operadunit} and reversing the arrows.
Similarly for left, right and bi~comodules and for $\sym$-module-coalgebras for a cooperad; these are defined by taking the dual definitions of left, right and bi~modules and of $\sym$-module-algebras respectively.
To avoid the term `bicomodule' (or indeed `cobimodule') we will use the term \emph{comodule} for the dual notion of bimodule.
This should not cause confusion because for operads one should always specify the direction when using left or right comodules.

The linear dual of a cooperad is always an operad and similarly for the notions of modules and algebras.
When an operad $\CO$ is finite dimensional in the sense that $\CO(X)$ is finite dimensional for each set $X$ then the linear dual is a cooperad.
Again there are similar statements for modules and algebras.

\subsection{Twisting morphisms}
The homological algebra of operads was studied in~\cite{Ginzburg1994} and~\cite{Getzler1994operads}.  Here we recap the required basic theory.
For a full account see the book~\cite{Loday2012}.

An \emph{operadic twisting morphism} between a cooperad $\CC$ and an operad $\CO$ is a map of $\sym$-modules
\[
\kappa: \CC\rightarrow\CO
\]
of differential degree -1.  It must satisfy the equation
\[
d(\kappa) + \kappa\ast\kappa = 0,
\]
where $\kappa\ast\kappa$ is defined to be
\[\xymatrix{
\CC \ar[r]^(.4){\Delta_\CC} & \CC\cco\CC \ar[r]^{\kappa\cco\kappa} & \CO\cco\CO \ar[r]^(.6){\mu_\CO} & \CO.}
\]
This definition is equivalent to the requirement that the map $d_\kappa$ defined as the composite
\begin{equation}\label{eq_ltcp}\xymatrix@C=40pt{
\CC\circ\CO \ar[r]^(0.4){\Delta_\CC\circ\id} & (\CC\cco\CC)\circ\CO \ar[r]^{(\id\cco\kappa)\circ\id} & (\CC\cco\CO)\circ\CO \ar[dll]^\cong \\
 \CC\circ(\CO;\CO\circ\CO)\ar[r]_(.55){\id\circ(\id,\mu_\CO)} & \CC\circ(\CO;\CO) \ar[r]_(0.55){\beta_\CC\circ\id} & \CC\circ\CO }
\end{equation}
gives a differential $d_\kappa + d_{\CC\circ\CO}$.  
The associated chain complex is denoted $\CC\circ_\kappa \CO$ and is called the \emph{left twisted composite product}.

All of the examples of operads in this article will be \emph{quadratic}.
This means that they have a presentation by a generating $\sym$-module $\CV$ along with relations $\CR \subseteq \CV\cco\CV \subseteq\CF\CV$, where $\CF\CV$ denotes the free operad on $\CV$.
Note that the operad $\CF\CV$ is naturally graded by weight, that is the number of copies of $\CV$.  
Then $\CV\cco\CV$ is isomorphic to the subspace of weight 2 elements of $\CF\CV$.
For any quadratic operad $\CO = \CF\CV/(\CR)$ there is a notion of \emph{Koszul dual cooperad}, $\CO^{\antis}$.
This is the cooperad with generating $\sym$-module $\Sigma\CV$ and corelations $\Sigma^2\CR\subseteq \Sigma\CV\cco \Sigma\CV$.
The \emph{Koszul dual operad}, $\CO^!$ is defined to be the suspension of the linear dual $\susp(\CO^\antis)^\ast$.

The most common examples of twisting morphisms come from Koszul duality.
For any quadratic operad $\CO$ there is a twisting morphism, $\kappa$ from $\CO^\antis$ to $\CO$ sending $s\CV$ to $\CV$ and all other elements to $0$.
The operad $\CO$ is said to be \emph{Koszul} if the left twisted composite product $\CO^\antis\circ_\kappa\CO$ is quasi-isomorphic to the unit $\sym$-module $\CI$.
We refer the reader to Chapter~7 of~\cite{Loday2012} for the details and to Chapter~13 for many examples.

\section{Decompositions of bar homologies}
\label{section_barhomology}
Let $f:\CO\rightarrow\CP$ be a morphism of Koszul operads and let $A$ be a $\CP$-algebra.
As a first result the theory of right $\CP^\antis$-comodules is used to give decompositions of the bar homology of the $\CO$-algebra $f^\ast A$.

To start we review the bar complex of an $\CO$-algebra.
In fact we describe a more general complex, $\CM\circ_\kappa A$ for a right $\CO^\antis$-comodule $\CM$ and an $\sym$-module-algebra $A$ over $\CO$.
When the given comodule is $\CO^\antis$ itself and $A$ is an $\CO$-algebra, this specialises to the bar complex $B_\CO(A)$.
Next we state and prove~Lemma~\ref{lemma_barlemma} which will be used throughout this article.
An immediate application is Theorem~\ref{thm_barhodge} which is a tool for calculation of the bar homology of a $\CP$-algebra $A$ viewed as an $\CO$-algebra~$f^\ast A$.

In the final part of this section, the cyclic homology of an associative algebra is shown to be represented as a right $\As^\antis$-comodule.
This is inspired by work of Loday and Quillen~\cite{Loday1984cyclic}; we also discuss a right $\As^\antis$-comodule which represents the Lie algebra homology $H^{\text{CE}}_\ast(\mathfrak{gl}_\infty(A))$ of the infinite general linear group with entries in a unital associative algebra $A$.

\subsection{The bar homology}
Let $\CO$ be an augmented operad $\CO\rightarrow \CI$ with augmentation ideal $\barCO$ and let $A$ be an $\CO$-algebra.
Then the space of indecomposables of $A$ is the quotient
\[
A / \mu_A(\barCO\circ A).
\]
%where $(\barCO\circ A)$ is interpreted to be the image of $\barCO\circ A$ in $A$ under the multiplication map $\mu_A:\CO\circ A\rightarrow A$.
The indecomposables functor is the left adjoint in a Quillen adjunction and the bar complex is its derived functor; the bar homology is then the homology of the bar complex.
When the operad $\CO$ is Koszul then there is a simple description using the Koszul dual cooperad, we will use this as the definition of the bar complex in Example~\ref{example_barhomology}.

There is a generalisation of left twisted composite products for right comodules.
Let $\kappa:\CC\rightarrow\CO$ be a twisting morphism, let $\CM$ be a right $\CC$-comodule and let $\CA$ be an $\sym$-module-algebra over $\CO$.
Then the left twisted composite of $\CM$ and $\CA$ is defined in a similar manner to the left twisted composite of $\CC$ and $\CO$.
The underlying graded $\sym$-module is $\CM\circ\CA$, but it has in addition a differential defined by
\[\xymatrix@C=40pt{
\CM\circ\CA \ar[r]^(0.4){\Delta^R_\CM \circ\id} & (\CM\cco\CC)\circ\CA \ar[r]^{(\id\cco\kappa)\circ\id} & (\CM\cco\CO)\circ\CA \ar[dll]^\cong \\
 \CM\circ(\CA;\CO\circ\CA)\ar[r]_(.55){\id\circ(\id,\mu_\CA)} & \CM\circ(\CA;\CA) \ar[r]_(0.55){\beta_\CM\circ\id} & \CM\circ\CA }
\]
Denoting this composite map by $d_\kappa$, the \emph{left twisted composite} is $\CM\circ_\kappa \CA = (\CM\circ\CA, d_\kappa + d_{\CM\circ\CA})$.
\begin{lemma}
Let $\CM$ and $\CA$ be as above.  Then the left twisted composite $\CM\circ_\kappa\CA$ is a well defined chain complex.
The functor $(-)\circ_\kappa\CA$ is right exact and when the base field~$k$ is of characteristic~0 it is also left exact.
\end{lemma}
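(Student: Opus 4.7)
The plan is to verify the three claims in sequence, adapting to the comodule--algebra setting the standard verifications carried out for $\CC\circ_\kappa\CO$. The formula for the twisting differential $d_\kappa$ on $\CM\circ\CA$ has the same shape as the formula~\eqref{eq_ltcp} for $\CC\circ_\kappa\CO$, with the right coaction $\Delta^R_\CM$ replacing $\Delta_\CC$ and the $\sym$-module-algebra action $\mu_\CA$ replacing $\mu_\CO$, so the arguments will follow the pattern already established.

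For the first claim, write $d = d_\kappa + d_{\CM\circ\CA}$ and expand $d^2$. The square $d_{\CM\circ\CA}^2$ vanishes automatically, since $d_{\CM\circ\CA}$ is induced from the internal differentials on $\CM$ and $\CA$ via the Koszul sign rule. The anticommutator $d_\kappa d_{\CM\circ\CA} + d_{\CM\circ\CA} d_\kappa$ equals, by chain-map compatibility of $\Delta^R_\CM$ and $\mu_\CA$, the operator obtained from the definition of $d_\kappa$ by replacing $\kappa$ with $d(\kappa)$. For $d_\kappa^2$, apply the formula twice: $\Delta^R_\CM$ enters twice, and by coassociativity of the right coaction the composite factors through $(\id\cco\Delta_\CC)\circ\Delta^R_\CM$; the $\sym$-module-algebra axiom for $\CA$ then identifies $\mu_\CA\circ(\mu_\CO\circ\id)$ with $\mu_\CA\circ(\id\circ\mu_\CA)$, and one reads off that $d_\kappa^2$ coincides with the operator obtained by replacing $\kappa$ with $\kappa\ast\kappa$. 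Summing, $d^2$ is built from $d(\kappa)+\kappa\ast\kappa$, which vanishes by the twisting morphism equation. This diagram chase, tracking the natural inclusions~\eqref{eq_circ1assoc} together with the Koszul signs, is the main technical obstacle.

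For the exactness statements, formula~\eqref{eq_compositionproduct} presents $\CM\circ\CA$ as a direct sum built from $k$-tensor products and $\sym_r$-coinvariants. Tensor products over the field $k$ are exact, and $\sym_r$-coinvariants are right exact as a left adjoint to the trivial-action functor; hence $(-)\circ\CA$ is right exact on the underlying graded level, and the naturality of $d_\kappa$ in $\CM$ lifts this to right exactness of $(-)\circ_\kappa\CA$. In characteristic~0, Maschke's theorem makes $k[\sym_r]$ semisimple, so the coinvariants functor agrees with the invariants functor via the averaging projector $\tfrac{1}{r!}\sum_{\sigma\in\sym_r}\sigma$ and is therefore exact. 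Exactness of $(-)\circ\CA$ on underlying graded $\sym$-modules follows, and hence so does exactness of $(-)\circ_\kappa\CA$.
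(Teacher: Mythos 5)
Your proof is correct in outline, but it takes a genuinely different route from the paper on the first claim. For well-definedness of the differential, the paper does not expand $d^2$ at all: it observes that $\CM\circ_\kappa\CA$ is isomorphic to the relative composite $\CM\circ_\CC(\CC\circ_\kappa\CO)\circ_\CO\CA$, so the fact that $d^2=0$ is inherited directly from the already-established fact that $\CC\circ_\kappa\CO$ is a chain complex compatible with its left $\CC$-comodule and right $\CO$-module structures. This buys a proof with essentially no sign bookkeeping, at the cost of invoking the relative composition products. Your direct verification also works, and is the standard computation generalised from the $\CC\circ_\kappa\CO$ case; but note one point your sketch glosses over: when you apply $d_\kappa$ twice, coassociativity of $\Delta^R_\CM$ does not reduce everything to the nested composite $(\id\cco\Delta_\CC)\circ\Delta^R_\CM$. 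The iterated coaction also produces terms of $\circ_2$-type, in which the two copies of $\kappa$ are applied at disjoint positions; these do not contribute to $\kappa\ast\kappa$ and must instead cancel in pairs via the Koszul sign rule, since swapping two degree $-1$ operations introduces a sign. Without accounting for these terms the identification of $d_\kappa^2$ with the operator built from $\kappa\ast\kappa$ is incomplete, so if you pursue the direct route you should make that cancellation explicit.

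On the exactness statements your argument is essentially identical to the paper's: both reduce to the underlying graded composition product \eqref{eq_compositionproduct}, observe that each summand is a tensor product over $k\sym_r$ (right exact in the $\CM(r)$ variable), and invoke semisimplicity of $k\sym_r$ in characteristic $0$ for two-sided exactness.
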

\begin{proof}
The product $\CC\circ\CO$ is both a left $\CC$-comodule and a right $\CO$-module and the differential $d_\kappa$ is compatible with this structure and hence $\CC\circ_\kappa\CO$ also holds these structures.
The product $\CM\circ_\kappa\CA$ is isomorphic to
\[
\CM \circ_\CC (\CC\circ_\kappa \CO) \circ_\CO \CA.
\]
The map $\kappa$ is a twisting morphism so $\CC\circ_\kappa\CO$ is a chain complex and hence so is~$\CM\circ_\kappa\CA$.

As the underlying graded module of $\CM\circ_\kappa\!\CA$ is the composition product $\CM\circ\CA$ the exactness only depends on the category of $\sym$-modules.  
By the definition~\eqref{eq_compositionproduct}, for a finite set $X$ the graded vector space $\CM\circ\CA(X)$ is the direct sum
\[
\bigoplus_{r\geq 1} \CM(\cbrac{1\clc r}) \otimes_{\sym_r}\bigoplus_{X = X_1\amalg\ldots\amalg X_r} \bigotimes_{i=1}^r \CA(X_i).
\]
Each term of this direct sum is a tensor product of the right $k\sym_r$-module $\CM(r)$ with a left $k\sym_r$-module.
This is right exact in the $\CM(r)$ term and hence the whole functor $(-)\circ\CA$ is right exact.

Finally, when the characteristic is~$0$ the algebra $k\sym_r$ is semi-simple for each $r$ and so the tensor products are exact, hence the functor $(-)\circ\CA$ is exact.
\end{proof}
\begin{example}
Let $\CM$ be equal to $\CC$ viewed as a right module over itself, likewise let $\CA=\CO$ be viewed as an $\sym$-module-algebra over itself.  Then $\CM\circ_\kappa\CN \cong \CC\circ_\kappa\CO$ is the standard left twisted composite product.
\end{example}
\begin{example}\label{example_barhomology}
Suppose that $\CO$ is a Koszul operad and let $\CC$ be its Koszul dual cooperad.
Let $A$ be an algebra for $\CO$ concentrated in arity 0.
Then $\CC\circ_\kappa A$ is the \emph{bar complex}, $B_\CO(A)$ of the $\CO$-algebra~$A$.
\end{example}
\begin{example}\label{example_barcomplex}
Let $\As$ be the associative operad and $A$ be an associative algebra.  Then the bar homology is computed by $B_\As(A)=\susp\coAs\circ_\kappa A$, or
\[
B_\As(A) = A \oplus s(A\otimes A) \oplus s^2(A\otimes A \otimes A) \oplus\ldots,
\]
with elements traditionally written, see~\cite{Eilenberg1953}, as $[a_1\mid a_2\mid \ldots \mid a_k]$ and differential
\[
d_\kappa ([a_1\mid a_2\mid \ldots \mid a_k]) = \sum_{i=1}^{k-1} (-1)^{i-1+s_i} [a_1\mid\ldots\mid a_i a_{i+1}\mid\ldots\mid a_k]
\]
for homogeneous elements $a_j$ with $s_i=\sum_{j=1}^{i-1}|a_j|$ the sum of degrees.
\end{example}
\begin{remark}
The lemma above gave a condition on the field $k$ under which the functor $(-)\circ_\kappa\!\CA$ is exact.
If $\CA(\emptyset)=0$ then the functor is exact for any field.
This is because the left $k\sym_r$-modules
\[
\bigoplus_{X = X_1\amalg\ldots\amalg X_r} \bigotimes_{i=1}^r \CA(X_i)
\]
are free.  This fact is of central importance in the theory of shuffle operads and Gr\"obner bases of operads~\cite{Dotsenko2010grobner} which only applies to $\sym$-modules which are~$0$ on the empty set.
\end{remark}

\subsection{The relative setup}
Consider a commuting square
\begin{equation}\label{eq_relativesquare}
\xymatrix{
\CO \ar[r]^f & \CP  \\
\CC \ar[u]^\kappa \ar[r]_g & \CD \ar[u]_(.54){\kappa'},
}
\end{equation}
where $f$ is a morphism of operads, $g$ is a morphism of cooperads and $\kappa,\kappa'$ are twisting morphisms.
The primary example is when $\CO$ and $\CP$ are quadratic operads and the map $f$ takes the generators of $\CO$ to the generators of $\CP$.
Then the map $g$ is naturally defined between the Koszul duals $\CC = \CO^\antis$ and $\CD = \CP^\antis$ and forms a commuting square with the natural twisting maps.

\begin{lemma}\label{lemma_barlemma}
Let $\CM$ be a right $\CC$-comodule and let $\CA$ be an $\sym$-module-algebra over~$\CP$.
Write $g^\ast\CM$ for the $\CD$-comodule based on $\CM$ and write $f^\ast\CA$ for the $\sym$-module-algebra over $\CO$ based on~$\CA$.
Then
\begin{equation}\label{eq_barlemma}
\CM\circ_\kappa f^\ast \CA \,\cong\, g^\ast\CM\circ_{\kappa'}\CA.
\end{equation}
\end{lemma}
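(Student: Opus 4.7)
The plan is to observe that both sides of \eqref{eq_barlemma} have the same underlying graded $\sym$-module, namely $\CM\circ\CA$, since restriction along $f$ does not change the underlying $\sym$-module of $\CA$ and restriction along $g$ does not change the underlying $\sym$-module of $\CM$. The internal differential $d_{\CM\circ\CA}$ is likewise the same on both sides, since it depends only on the differentials of $\CM$ and $\CA$ as $\sym$-modules. Hence the content of the lemma reduces to showing that the twisting differentials $d_\kappa$ and $d_{\kappa'}$ agree as endomorphisms of $\CM\circ\CA$.

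To do this I would write out both twisting differentials as composites, following the formula displayed before Example~\ref{example_barhomology}. For the left-hand side, $d_\kappa$ factors as
\[
\CM\circ\CA \xrightarrow{\Delta^R_\CM\circ\id} (\CM\cco\CC)\circ\CA \xrightarrow{(\id\cco\kappa)\circ\id} (\CM\cco\CO)\circ\CA \to \CM\circ\CA,
\]
where the last arrow uses the $\CO$-algebra structure on $f^\ast\CA$, which is by definition the composite $\CO\circ\CA \xrightarrow{f\circ\id} \CP\circ\CA \xrightarrow{\mu_\CA} \CA$. For the right-hand side, the coaction on $g^\ast\CM$ is $\Delta^R_\CM$ followed by $\id\cco g$, and the twisting morphism used is $\kappa'$; so $d_{\kappa'}$ factors through $(\id\cco g)$ and $(\id\cco\kappa')$, ending with the $\CP$-algebra structure $\mu_\CA$ on $\CA$.

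The point is that both composites pass through the common map $\id\cco(f\kappa)=\id\cco(\kappa'g)\colon \CM\cco\CC \to \CM\cco\CP$, which is a single arrow by the commutativity of the square~\eqref{eq_relativesquare}. Once the two composites are rearranged so that the $f$ appearing in the definition of $f^\ast\CA$ is pushed past the isomorphism $(\CM\cco\CO)\circ\CA\cong\CM\circ(\CA;\CO\circ\CA)$ and combined with $\kappa$, and similarly the $g$ in the definition of the coaction on $g^\ast\CM$ is combined with $\kappa'$, the two composites become manifestly equal. This rearrangement is legitimate because the isomorphism $(\CX\cco\CY)\circ\CZ\cong\CX\circ(\CZ;\CY\circ\CZ)$ is natural in $\CY$.

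The main obstacle I anticipate is purely bookkeeping: one must keep track of the various infinitesimal composition products and the natural isomorphisms between them to turn the verification into a commuting diagram. No genuinely new input is needed beyond the hypothesis $f\kappa=\kappa'g$ from the commuting square; once the diagram is drawn, equality of the differentials is immediate and yields the isomorphism of chain complexes.
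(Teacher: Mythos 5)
Your proposal is correct and follows essentially the same route as the paper: both reduce the statement to the equality of the twisting differentials on the common underlying module $\CM\circ\CA$, and both establish that equality via a commuting diagram whose left-hand square encodes $f\kappa=\kappa'g$ from the square~\eqref{eq_relativesquare} and whose right-hand square is the naturality of the infinitesimal composition isomorphisms. The bookkeeping you anticipate is exactly what the paper's displayed diagram carries out.
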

\begin{proof}
Both $\sym$-modules are equal to $\CM\circ\CN$ with an added differential, $d_\kappa$ on the left hand side and $d_{\kappa'}$ on the right hand side.
We need to prove that these are equivalent.
Consider the diagram,
\begin{equation}
\xymatrix@C=0.9pc{
\CM\circ\CA \ar[r]\ar@/_/[dr]_(.35){\Delta^R_{g^\ast \CM}\circ\id} &(\CM\cco\CC)\circ\CA \ar[r]\ar[d] & (\CM\cco\CO)\circ\CA \ar[r]^\cong\ar[d] &
\CM\circ(\CA;\CO\circ\CA) \ar@/^/[dr]^(.65){\id\circ(\id;\mu_{f^\ast\!\CA})}\ar[d] &\\
&(\CM\cco\CD)\circ\CA \ar[r] & (\CM\cco\CP)\circ\CA \ar[r]^\cong  
&\CM\circ(\CA;\CP\circ\CA) \ar[r] & \CM\circ(\CA;\CA) \ar[d] \\
&&&& \CM\circ\CA.
}
\end{equation}
The composition along the top edge of the diagram is $d_\kappa$, whilst the composition along the bottom is $d_{\kappa'}$.
To see that the diagram commutes we only need to note that the left square is derived from the square~\eqref{eq_relativesquare}, whilst the right square comes purely from the naturality of the composition products.
The diagonal edges are defined as the composites of the other edges of the triangles they span.
\end{proof}
\begin{theorem}\label{thm_barhodge}
Let $f:\CO\rightarrow\CP$ be a map of Koszul operads with $g:\CC\rightarrow\CD$ the corresponding map of Koszul duals.
Let $A$ be a $\CP$-algebra and $f^\ast A$ be its associated $\CO$-algebra.
\begin{enumerate}
\item[(I)] We have the following expression for the bar complex of $f^\ast A$.
\begin{equation}
B_\CO(f^\ast A) \cong g^\ast\CC \circ_{\kappa'} A,
\end{equation}
where $g^\ast \CC$ is the cooperad $\CC$ viewed as a right $\CD$-comodule.
\item[(II)]
Suppose further that $g^\ast\CC$ is free as a right $\CD$-comodule with generating $\sym$-module $\CB$.
Then
\begin{equation}
B_\CO(f^\ast A) \cong \CB \circ B_\CP(A).
\end{equation}
\end{enumerate}
\end{theorem}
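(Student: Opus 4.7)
\medskip

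\noindent\textbf{Proof plan.} Part (I) should follow immediately by combining Example~\ref{example_barhomology} with Lemma~\ref{lemma_barlemma}. First I would rewrite the bar complex as $B_\CO(f^\ast A) = \CC \circ_\kappa f^\ast A$, viewing $\CC$ as a right $\CC$-comodule over itself via its comultiplication. Then I would apply Lemma~\ref{lemma_barlemma} with $\CM=\CC$, using the commuting square~\eqref{eq_relativesquare} of Koszul twisting morphisms, to rewrite this as $g^\ast\CC \circ_{\kappa'} A$. Nothing beyond unpacking the definitions is required here; the content is entirely in the previous lemma.

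For Part (II), I would begin by using the hypothesis that $g^\ast\CC \cong \CB \circ \CD$ as right $\CD$-comodules, where the coaction on the right-hand side is $\id_\CB \circ \Delta_\CD$. Under the associativity isomorphism of composition products,
\[
(\CB\circ\CD)\circ A \;\cong\; \CB\circ(\CD\circ A),
\]
my aim is to show that the twisted differential on the left corresponds to $\id_\CB\circ' (d_{\kappa'}+d_{\CD\circ A})$ plus the internal differential $d_\CB\circ\id$ on the right, so that the result is precisely $\CB\circ B_\CP(A)$.

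The key step is to trace through the definition of $d_{\kappa'}$ from the diagram in the definition of left twisted composite: the coaction $\Delta^R:\CB\circ\CD \to (\CB\circ\CD)\cco\CD$ factors as $\id_\CB\circ'\Delta_\CD$, so when composed with $(\id\cco\kappa')\circ\id$ and the product $\mu_A$ the action of $\kappa'$ only touches the $\CD\circ A$ factor and leaves $\CB$ untouched. Symbolically this is the identity $d_{\kappa'}^{(\CB\circ\CD)\circ A} = \id_\CB \circ' d_{\kappa'}^{\CD\circ A}$, after which the result follows from the formula $d_{\CM\circ\CN} = d_\CM\circ\id + \id\circ' d_\CN$ for the differential on a composition of $\sym$-modules given in Section~\ref{section_background}.

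The main obstacle is the careful bookkeeping in this last step: one must verify that the several structure maps (the isomorphism~\eqref{eq_circ1assoc}, the natural transformation $\beta$, the coaction of a free comodule) all cooperate so that the twisted differential genuinely restricts in the claimed way. Once this is checked, Part (II) is an immediate consequence of Part (I) and the associativity of~$\circ$.
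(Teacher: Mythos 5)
Your proposal is correct and follows essentially the same route as the paper: part (I) is Example~\ref{example_barhomology} combined with Lemma~\ref{lemma_barlemma} applied to $\CC$ as a comodule over itself, and part (II) is the chain of isomorphisms $g^\ast\CC\circ_{\kappa'}A\cong(\CB\circ\CD)\circ_{\kappa'}A\cong\CB\circ(\CD\circ_{\kappa'}A)\cong\CB\circ B_\CP(A)$. The paper states the middle isomorphism without comment, whereas you spell out why the twisted differential on the free comodule restricts to $\id_\CB\circ' d_{\kappa'}$; this extra bookkeeping is accurate but not a different argument.
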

\begin{proof}
For part (I) note that when $A$ is a $\CO$-algebra then the twisted composite $\CC\circ_\kappa A$ is isomorphic to the bar complex of~$A$.
Applying Lemma~\ref{lemma_barlemma} with $\CM = g^\ast\CC$ gives the isomorphism.

For part (II) we use part (I), if $g^\ast\CC$ is isomorphic to $\CB\circ\CD$ as a right $\CD$-module then 
\[
B_\CO(f^\ast A) \cong g^\ast \CC\circ_\kappa A \cong (\CB\circ\CD)\circ_\kappa A \cong \CB\circ(\CD\circ_\kappa A) \cong \CB\circ B_\CP (A),
\]
which is the required isomorphism.
\end{proof}
\begin{example}
The Koszul dual pair of operads $\Perm$ and $\PLie$ were studied by Chapoton and Livernet~\cite{Chapoton2001rooted}.
There is a map $\Perm\rightarrow\Com$ of operads encoding the fact that a commutative algebra is naturally a permutative algebra.
In the Koszul dual picture we have a map $\susp\coPLie\rightarrow\susp\coLie$ of cooperads.
Since $\PLie$ is free as a right $\Lie$-module (proved for instance by Theorem~4 of~\cite{Dotsenko2011freeness}) we may apply the second part of Theorem~\ref{thm_barhodge}.
We find that for a commutative algebra $A$, the bar homology of $A$ viewed as a permutative algebra is isomorphic to $\CB\circ B_\Com(A)$ for some $\sym$-module~$\CB$.
\end{example}
We will see many other examples provided by the theory of filtered distributive laws in Section~\ref{section_fdls}.

\subsection{Cyclic homology}
The cyclic homology of an associative algebra is a functor from the category of algebras to graded vector spaces; we will construct a right $\As^\antis$-comodule, for which the generalised twisted composite product with an algebra~$A$ provides Connes' complex~$C^\lambda_\ast(A)$.
In characteristic~0, this computes the cyclic homology~$HC_\ast(A)$.
For definitions of these terms and the functoriality of cyclic homology, see Section~2.1 of~\cite{Loday1998}.

Let~$\CX$ be the~$\sym$-module with~$\CX(n) = k\sym_n$ and $\sym_n$-action by conjugation.
This splits into a sum of~$\sym_n$-modules indexed by the conjugacy classes of~$\sym_n$.
Define a sub-$\sym$-module $\Cyc$ with $\Cyc(n)$ the span of $n$-cycles and denote the cycle~$(1\, 2\,\ldots\, n)$ by $\sigma_n$, the generates the \(k\sym_n\)-module \(\Cyc(n)\).
The dimension of $\Cyc(n)$ is $(n-1)!$ and the action of $(1\,\ldots\, n)\in\sym_n$ on $\sigma_n$ is trivial, hence $\Cyc(n)$ is isomorphic to~$k_{C_n}\otimes_{C_n} k\sym_n$, the trivial module for the cyclic group~$C_n$ induced up to~$\sym_n$.

The sub-$\sym$-module~$\Cyc$ generates~$\CX$ as a commutative algebra in the category of $\sym$-modules where $\CX(n)\otimes\CX(m) \rightarrow \CX(n+m)$ is the inclusion $k\sym_n\otimes k\sym_m \hookrightarrow k\sym_{n+m}$.
In fact $\CX$ is free, so $\CX\cong \Com\circ\Cyc$.
This should be familiar, it is the standard cycle decomposition of a permutation.

By defining $\sigma_n \circ_i m_2 = \sigma_{n+1}$ for $i=1\ldots n$, we put a right $\As$-module structure on $\Cyc$ and hence also on~$\CX$ by extension using~$\CX\cong \Com\circ \Cyc$.
Let $\CY = \Sigma\CX^\ast$ and $\CC=\Sigma\Cyc^\ast$ be the suspensions of the linear duals of $\CX$ and $\Cyc$.
Then both~$\CY$ and~$\CC$ are right $\As^\antis$-comodules and for an associative algebra~$A$ we will study the complexes~$\CY\circ_\kappa A$ and~$\CC\circ_\kappa A$.

We start by noting that $\CC(n) \cong s^{n-1} \sgn_{C_n} \otimes_{C_n}k\sym_n$, where $\sgn_{C_n}$ is the sign representation for $C_n$.
Thus $\CC(n)\otimes_{\sym_n} A^{\otimes n}$ is isomorphic to $s^{n-1}\sgn_n\otimes_{C_n} A^{\otimes n}$ or equivalently the coinvariants of the action of an operator $t$ which acts by cyclically permuting the copies of~$A^{\otimes n}$ along with a sign~$(-1)^n$.
The differential~$d_\kappa$ coincides with the differential of the complex~$C^\lambda_\ast(A)$ of~\cite{Loday1998}, which in characteristic~0, computes the cyclic homology of $A$.  The proof that the differentials coincide is direct; one needs to carefully examine the two definitions, see~\cite{Loday1984cyclic} and Section~10.2 of~\cite{Loday1998} for similar arguments.

For $\CY\circ_\kappa A$ we use the isomorphism $(\Sigma\Com\circ \CC)\circ_\kappa A \cong \Sigma\Com\circ(\CC\circ_\kappa A)$ to find that it is isomorphic to the desuspension of the free cocommutative coalgebra with cogenerators the suspension of~$C^\lambda_\ast(A)$.
This complex was used in~\cite{Loday1984cyclic} in the calculation of the homology of the infinite general linear group with entries in~$A$.

\begin{remark}
For a cyclic operad $\CO$ the dual cooperad $\CC$ is cyclic; one may define a right $\CC$-comodule by $\CC'(n)=\CC(n-1)$.
Then $\CC'\circ_\kappa A$ computes the cyclic homology of a $\CO$-algebra $A$ with coefficients in itself, this is defined in~\cite{Getzler1995cyclic}.
\end{remark}

\begin{remark}
For a Lie algebra $\mathfrak{g}$ there is a right $\coLie$-comodule structure on the $\sym$-module $\CM_\mathfrak{g}$, where $\CM_\mathfrak{g}(n) = \mathfrak{g}^{\otimes n}$.
The bilinear comultiplication $\Delta^1:\CM_\mathfrak{g} \rightarrow \CM_\mathfrak{g} \circ_1 \coLie(2)$ is given by
\[
\Delta^1(g_1\otimes\ldots\otimes g_n) =
\sum_{1\leq i < j\leq n} ([g_i,g_j]\otimes g_1 \otimes\ldots \widehat{g}_i \ldots \widehat{g}_j\ldots\otimes g_n) \otimes l(\cbrac{i,j}).
\]
For a commutative algebra $A$ the left twisted composite $\Sigma\CM_\mathfrak{g}\circ_\kappa A$ is isomorphic to the Chevalley-Eilenberg complex of the Lie algebra $\mathfrak{g}\otimes A$, which can be seen via a direct comparison.
When $A$ is unital and $\mathfrak{g}$ is reductive, the homology is carried on the $\mathfrak{g}$-invariants of the Chevalley-Eilenberg complex and hence the left twisted composite $\Sigma\CM_\mathfrak{g}^\text{inv} \circ_\kappa A$, with $\CM_\mathfrak{g}^\text{inv}(n):= (\mathfrak{g}^{\otimes n})^{\mathfrak{g}}$ a subcomodule of $\CM_\mathfrak{g}$, calculates the homology of~$\mathfrak{g}\otimes A$.

We do not pursue this idea any further but will connect it to the discussion above:
when $\mathfrak{g} = \mathfrak{gl}_\infty$ is the limit of the general linear Lie algebras we have $\text{lim}_{k\rightarrow\infty}\Sigma\CM_{\mathfrak{gl}_k}^{\text{inv}} \cong \CY$, where $\CY$ is the comodule defined above but viewed as a $\Com^\antis$-comodule rather than an $\As^\antis$-comodule.
\end{remark}

\section{Cohomology of algebras}
\label{section_cohomology}
Theorem~\ref{thm_barhodge} showed how the bar homology of a $\CP$-algebra viewed as an $\CO$-algebra could be decomposed in terms of the right $\CP^\antis$-comodule structure of $\CO^\antis$.
We now extend this result to the homology and cohomology theories for algebras over an operad.

To begin we review the definitions of the homology and cohomology theories.
Of central importance to these definitions is the cotangent complex associated to an algebra.
And the cental result is Lemma~\ref{lemma_barlemma} which connects the cotangent complex with the module theory of operads.
This is used in Theorem~\ref{thm_cohomologyhodge} to show how the homology and cohomology theories of a $\CP$-algebra viewed as a $\CO$-algebra can be decomposed by considering the $\CP^\antis$-comodule structure of $\CO^\antis$.

\subsection{Enveloping algebras}\label{section_envelopingalgebras}
Let $\CO$ be an operad and $A$ an $\CO$-algebra concentrated in arity 0.
Then the \emph{enveloping algebra} $\UO A$ is the coequaliser
\begin{equation}
\label{eq_UOcoeq}\xymatrix@C=3.0pc{
\CO\circ(\CO\circ A;\CI) \ar@<1ex>[r]^(.55){\id\circ(\mu_A;\id)} \ar@<-1ex>[r]_(.55){\mu_\CO'} & \CO\circ(A;\CI) \ar@{.>}[r]^(.57){\pi_A} & \UO A
}\end{equation}
where $\mu_\CO'$ uses the multiplication $\mu_\CO$ to contract multiple copies of~$\CO$.
The coequaliser \(\UO A\) is naturally a unital associative algebra.
An $A$-module is defined to be a left $\UO A$-module, details may be found in Chapter~4 of~\cite{Fresse2009}.
\begin{example}
\begin{itemize}
\item Let $\CO$ be the associative operad and $A$ be an associative algebra, then $\UO A$ is isomorphic to $A^e:= k\oplus A\otimes A^{\text{op}}$.  The left modules for this algebra are the bimodules for $A$.
\item If $\CO$ is the Lie operad and $\mathfrak{g}$ a Lie algebra then $\UO \mathfrak{g}$ is isomorphic to the universal enveloping algebra
\[
\UU_\Lie\mathfrak{g} \cong T \mathfrak{g} / (xy - yx - [x,y]_{\mathfrak{g}}\mid x,y\in\mathfrak{g}).
\]
\item If $\CO$ is the commutative operad and $A$ is a commutative algebra then $\UU_\Com A\cong k \oplus A$.
\end{itemize}
\end{example}
If $f:\CO\rightarrow\CP$ is a morphism of operads and $A$ is a $\CP$-algebra then there is a morphism of associative algebras $\Uf A:\UO f^\ast A \rightarrow \UP A$.
This map is constructed by considering the diagram
\begin{equation}\label{eq_UfA}\xymatrix{
\CO\circ(\CO\circ A;\CI) \ar[r]\ar@/^1pc/[rr]^(.55){\mu_\CO'} & \CO\circ(\CP\circ A;\CI) \ar[d]\ar[r] & \CO\circ(A;\CI) \ar[dr]^\gamma \ar[d]\ar[r]^(.55){\pi_{f^\ast\! A}} & \UO f^\ast\! A \ar@{.>}[d]^{\Uf A} \\
& \CP\circ(\CP\circ A;\CI)\ar[r]& \CP\circ(A;\CI) \ar[r]^(.55){\pi_A} & \UP A.
}\end{equation}
The map $\gamma$ coequalises the diagram~\eqref{eq_UOcoeq} and so uniquely defines $\Uf A$.
The map $\Uf A$ observes that if $M$ is a module for $A$ viewed as an $\CO$-algebra, then $M$ is still a module for $f^\ast A$ as a $\CP$-algebra.
We denote this $f^\ast A$-module by $f^\ast M$.

\subsection{Cotangent complexes}\label{section_cotangentcomplex}
Suppose that $\CO$ is a Koszul operad with Koszul dual cooperad $\CC$ and that $A$ is an $\CO$-algebra.
We will define the cotangent complex $\LLO A$ via an explicit construction.  In~\cite{Loday2012} it is defined by taking the K\"ahler differentials of a cofibrant replacement of the algebra $A$.  
We choose an explicit model for two reasons; (1) it simplifies the exposition of this article, we do not need to recall all the background theory to prove our results and (2) it is through this explicit model of the cotangent complex that we are able to see the desired structure.

We start with the free left $\UO A$-module generated by the bar complex of $A$,
\[
\UO A \otimes B_\CO (A) \cong \UO A \otimes (\CC\circ_\kappa A).
\]
Then we add an extra differential, defined on the generators by
\[\xymatrix{
\CC\circ A \ar[r]^(.4){\Delta_\CC\circ\id} & (\CC\cco\CC)\circ A \ar[r]^(.42){\cong} & \CC\circ(A;\CI)\otimes (\CC\circ A) \ar[dl]_{\kappa\circ\id} \\
&\CO\circ(A;\CI)\otimes(\CC\circ A) \ar[r]^(.53){\pi_A\otimes\id} & \UO A \otimes (\CC\circ A).
}\]
Calling this differential $d^l$ we have the \emph{cotangent complex}
\[
\LLO A := (\UO A\otimes (\CC\circ_\kappa \! A),\, d^l) \cong (\UO A\otimes (\CC\circ A),\, d^l + \id \otimes \, d_\kappa).
\]
We are now ready to define the homology and cohomology of an $\CO$-algebra $A$.
Let $M$ be a left $\UO A$-module and let $N$ be a right $\UO A$-module.
Then the \emph{cohomology of $\UO A$ with coefficients in $M$} is defined to be
\[
H_\CO^\ast(A,M) := H^\ast(\Hom^\ast_{\UO A}(\LLO A, M)),
\]
and the \emph{homology of $\UO A$ with coefficients in $N$} is defined to be
\[
H^\CO_\ast(A,M) := H_\ast(N\otimes_{\UO A} \LLO A).
\]
\begin{remark}
\label{rem_HH}
When $\CO$ is the associative operad these theories are closely related to Hochschild cohomology and homology, see~\cite{Loday1998}.
The precise relation is given as follows: for $i\geq 2$,
\[
H\!H^i(A,M) \cong H^{i-1}_\As(A,M)\quad\text{ and }\quad H\!H_i(A,M) \cong H_{i-1}^\As(A,M).
\]
This does not hold for $i=1$ as the Hochschild (co)complex is truncated.  For example $H_\As^0(A,M)$ is equal to the derivations of $A$ with coefficients in $M$, whereas $H\!H^1(A,M)$ is given by the derivations modulo the inner derivations.
Finally the zeroth Hochschild (co)homology groups are not represented at all.

There is a similar relationship when $\CO = \Com$ with the Harrison (co)homology of commutative algebras and when $\CO=\Lie$ with the (co)homology of Lie algebras.
\end{remark}

\subsection{The generalised cotangent complex associated to a comodule, $\CM$}
Let $\CO$ be a Koszul operad and $\CC$ its Koszul dual cooperad.  Let $\CM$ be a $\CC$-comodule.
Then for an $\CO$-algebra~$A$ we define $\LLO^\CM A$ to be
\[
\LLO^\CM A = (\UO A\otimes (\CM\circ_\kappa\! A),\, d^l) = (\UO A\otimes (\CM\circ A),\, d^l + \id \otimes\, d_\kappa),
\]
where $d^l$ is defined on generators by
\begin{equation}\label{eq_Mcotangent}\xymatrix{
\CM\circ A \ar[r]^(.4){\Delta^L_\CC\circ\id} & (\CC\cco\CM)\circ A \ar[r]^(.44){\cong} & \CC\circ(A;\CI)\otimes (\CM\circ A) \ar[dl]_{\kappa\circ\id} \\
&\CO\circ(A;\CI)\otimes(\CM\circ A) \ar[r]^(.53){\pi_A\otimes\id} & \UO A \otimes (\CM\circ A)
}\end{equation}
and then extended.
\begin{lemma}
\label{lemma_LLM}
This does define a chain complex $\LLO^\CM A$.
The functor $\LLO^{(-)}A$ from the category of $\CC$-comodules to the category of $\UO A$-modules is right exact and when the characteristic of the base field~$k$ is~$0$ the functor is also left exact.
\end{lemma}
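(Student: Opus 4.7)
The plan is to verify in two stages that $\LLO^\CM A$ is a chain complex with the claimed functorial exactness properties. First I would check that the total differential $d = d^l + \id\otimes d_\kappa$ on $\UO A\otimes(\CM\circ A)$ squares to zero; then I would read off the exactness statements from the underlying graded $\sym$-module, reusing the previous lemma.

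For $d^2 = 0$ I would split the computation into three pieces: $(\id\otimes d_\kappa)^2 = 0$, $(d^l)^2 = 0$, and the cross term $d^l(\id\otimes d_\kappa) + (\id\otimes d_\kappa)d^l = 0$. The first is immediate from the previous lemma applied to $\CM$ viewed as a right $\CC$-comodule: $\id\otimes d_\kappa$ is just the twisted composite differential $d_\kappa$ on $\CM\circ_\kappa A$ extended $\UO A$-linearly, so it already squares to zero. For $(d^l)^2 = 0$ I would iterate the defining diagram~\eqref{eq_Mcotangent}, invoke coassociativity of the left $\CC$-coaction on $\CM$ through the natural transformation~\eqref{eq_circ1assoc}, and combine the twisting morphism equation $d\kappa + \kappa\ast\kappa = 0$ with associativity of the multiplication on $\UO A$ to collect terms. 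The calculation runs parallel to the proof that $\CC\circ_\kappa\CO$ carries a square-zero differential, with $\CM$ replacing one copy of $\CC$.

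The cross term is the delicate piece and is the step I expect to be the main obstacle. It demands the bicomodule compatibility of $\CM$ (the paper's ``comodule'' condition), which relates the left and right $\CC$-coactions through a common factor $\CC\cco\CM\cco\CC$. I would assemble a single commutative diagram whose opposite edges realise the two summands of the anticommutator, and then fill the interior using coassociativity on each side, the bicomodule compatibility, and the associativity of $\UO A$ encoded by the coequaliser~\eqref{eq_UOcoeq} (which ensures that successive operadic compositions land unambiguously in $\UO A$). Tracking the natural maps between $\cco$ and $\circ$ and the Koszul signs is where most of the bookkeeping lives.

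For the exactness claims, the underlying graded $\UO A$-module of $\LLO^\CM A$ is $\UO A\otimes_k(\CM\circ A)$, and as a functor of $\CM$ this is $\UO A\otimes_k\bigl((-)\circ A\bigr)$. The composition product $(-)\circ A$ is right exact in general and, in characteristic zero, exact, by the same arguments as in the previous lemma (indeed its proof already establishes this at the level of the underlying composition product). Tensoring over $k$ with the flat $k$-module $\UO A$ preserves both properties, so $\LLO^{(-)}A$ inherits right exactness unconditionally and full exactness in characteristic zero.
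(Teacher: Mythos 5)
Your proposal follows essentially the same route as the paper: the square-zero property is reduced to coassociativity of the left coaction together with the twisting-morphism equation (the paper packages this as the vanishing differential-squared of a right twisted composite $\CO\circ_\kappa\CM$), the cross term is handled by the left/right comodule compatibility of $\CM$, and the exactness statements are read off from the underlying graded module $\UO A\otimes_k((-)\circ A)$ exactly as in the paper. The only cosmetic difference is that the paper first lifts the computation to $\CO\circ(A;\CI)$ before passing to the coequaliser $\UO A$, whereas you work with $\UO A$ directly; both are sound.
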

\begin{proof}
The algebra $\CO\circ(A;\CI)$ maps to $\UO A$ via $\pi_A$, so it will be sufficient to prove that $\CO\circ(A;\CI)\otimes(\CM\circ_\kappa\!\CA)$ is a chain complex with the differential defined by the first three arrows of~\eqref{eq_Mcotangent}.
Then $\LLO^\CM A$ may be obtained from this by a change of coefficients along~$\pi_A$.
Consider the diagram
\[\xymatrix@C=10pt@R=10pt{
\CM\circ A \ar[r] & (\CC\cco\CM)\circ A \ar[r] & (\CO\cco\CM)\circ A \ar@{}[r]|-{\cong} \ar[d] & \CO\circ(A;\CI)\otimes(\CM\circ A) \ar[d]\\
&& (\CO\cco(\CC\cco\CM))\circ A \ar@{}[r]|-{\cong}\ar[d] & \CO\circ(A;\CI)\otimes(\CC\cco\CM)\circ A\ar[d]\\
&& (\CO\cco(\CO\cco\CM))\circ A \ar@{}[r]|-{\cong}\ar[dd] & \CO\circ(A;\CI)\otimes(\CO\cco\CM)\circ A\ar[d]\\
&&& \hspace{-1cm}\CO\circ(A;\CI)\otimes\CO\circ(A;\CI)\otimes(\CM\circ A)\ar[d]^{\mu_{\CO\circ(A;\CI)}}\\
&& (\CO\cco\CM)\circ A \ar@{}[r]|-{\cong} & \CO\circ(A;\CI)\otimes(\CM\circ A)
}\]
The top row is the map which defines the differential on $\CO\circ(A;\CI)\otimes(\CM\circ A)$, while the right column is this same map with $\CO\circ(A;\CI)\otimes(-)$ applied, followed by the algebra multiplication map.
The rightmost horizontal arrows are all isomorphisms, compatible with the vertical maps.
So to show that $d^l$ squares to~0 we need only consider the first two maps on the top row followed by the left-most vertical maps.
But note that this is just the functor $(-)\circ A$ applied to the sequence
\begin{equation}\label{eq_COCMdiff}\xymatrix{
\CM \ar[r] & (\CC\cco\CM) \ar[r] & (\CO\cco\CM) \ar[dll] \\
\CO\cco(\CC\cco\CM) \ar[r] & \CO\cco(\CO\cco\CM) \ar[r] & \CO\cco\CM.
}\end{equation}
To see that this composite is zero we consider the \emph{right twisted composite product} $\CO\circ_\kappa\!\CC$ defined in~\cite{Loday2012}.
This has underlying complex $\CO\circ\CC$ with a differential defined similarly to the left twisted composite product~\eqref{eq_ltcp}.
The first two arrows of~\eqref{eq_COCMdiff} define the differential of a complex $\CO\!\circ_\kappa\!\CM$ given by replacing $\CC$ with $\CM$ in an analogous manner to Lemma~\ref{lemma_barlemma}.
Thus the composite of all arrows in~\eqref{eq_COCMdiff} is~0 and we have shown that \((d^l)^2=0\).
The compatibility of \(d^l\) and \(\id\otimes\, d_\kappa\) follows from compatibility of left and right comodule structures on \(\CM\).
So $\LLO^\CM A$ is a chain complex.

To see that $\LLO^{(-)} A$ defines a right exact functor we use the fact that as a graded vector space it is isomorphic to $\UO A \otimes_k ((-)\circ_\kappa\!A)$.
By Lemma~\ref{lemma_barlemma} the functor $(-)\circ_\kappa\!A$ is right exact and the underlying graded vector space is given by tensoring by a vector space over the base field, hence $\LLO^{(-)}A$ is also right exact.

When the characteristic is 0, use Lemma~\ref{lemma_barlemma} again to see that $(-)\circ_\kappa\!A$ is left exact as well and again we are tensoring with $\UO A$ over the basefield, so $\LLO^{(-)}A$ is also left exact.
\end{proof}

When $A$ is a free algebra, $\LLO^\CM A$ can be computed in a special case:
\begin{lemma}\label{lemma_LLOfreeA}
Let $\CO$ be a Koszul operad and $A=\CO\circ V$ a free $\CO$-algebra generated by a chain complex~\(V\).
Let $\CM$ be a $\CO^\antis$-comodule such that 
\[
\CM \cong \CB \circ \CO^\antis
\]
as a right $\CO^\antis$-comodule with $\CB$ an $\sym$-module concentrated in a single arity, $r$.
Then the cotangent complex $\LLO^\CM A$ of $A$ associated to $\CM$ is quasi-isomorphic to the free $\UO A$-module
\[
\UO A \otimes \CB \circ V.
\] 
\end{lemma}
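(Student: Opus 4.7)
The plan is to exhibit an explicit $\UO A$-linear quasi-isomorphism
\[
\phi: \UO A \otimes \CB\circ V \longrightarrow \LLO^\CM A,
\]
exploiting both the free structure of $A = \CO\circ V$ and the Koszul acyclicity $\CO^\antis\circ_\kappa \CO \quis \CI$. First I would rewrite the underlying graded module. Since $\CM\cong \CB\circ\CO^\antis$ as a right $\CO^\antis$-comodule with $\CB$ concentrated in arity $r$, there is a natural identification
\[
(\CB\circ\CO^\antis)\circ A \;\cong\; \CB\circ(\CO^\antis \circ A),
\]
and the underlying graded module of $\LLO^\CM A$ becomes $\UO A\otimes \CB\circ(\CO^\antis\circ A)$. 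Under this identification both $d^l$ and $\id\otimes d_\kappa$ act only on the $r$ inner $(\CO^\antis\circ A)$-slots: the component $\id\otimes d_\kappa$ because it is defined via the right $\CO^\antis$-coaction on $\CM$, which factors through the coaction on the $\CO^\antis$ tensor factor; and $d^l$ because the left $\CO^\antis$-coaction on $\CB\circ \CO^\antis$ comes entirely from the left coaction on the $\CO^\antis$ factor. The $\CB$-decoration is inert under all differentials because $\CB$ sits in a single arity.

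Second I would define $\phi$ by including $V\hookrightarrow A$ as the generators of $A = \CO\circ V$ and $\CI\hookrightarrow \CO^\antis$ by the unit, yielding a natural map $\CB\circ V \hookrightarrow \CB\circ(\CO^\antis\circ A)$ which I then extend $\UO A$-linearly. A direct computation shows that $d^l + \id\otimes d_\kappa$ vanishes on the image---both the twisting morphism $\kappa$ and the bar differential kill the image of the arity-one counit---so $\phi$ is a chain map with zero differential on the source.

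Third, and the bulk of the proof, I would show that $\phi$ is a quasi-isomorphism by a filtration argument. Filter $\LLO^\CM A$ by the total weight of $\CO$- and $\CO^\antis$-operations appearing in the inner slots together with the $A$-arguments. Both $d^l$ and $\id\otimes d_\kappa$ either decrease the number of $\CO^\antis$-operations and increase the $\CO$-weight (for $d_\kappa$, the output being absorbed into $A = \CO\circ V$) or emit a Koszul-paired factor into the $\UO A$-factor (for $d^l$); in either case the filtration is preserved. On the associated graded only the Koszul-twisting portion survives in each of the $r$ slots, and the slotwise complex reduces to $\CO^\antis\circ_\kappa(\CO\circ V)$, which is quasi-isomorphic to $V$ by Koszulness applied to a free algebra. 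Taking the tensor product over the $r$ slots with $\UO A$ in front yields $E^1 \cong \UO A\otimes \CB\circ V$, and boundedness below and exhaustiveness of the filtration deliver the global isomorphism on homology.

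The main obstacle is arranging the filtration so that the $E^1$-page genuinely decouples into a tensor product of slotwise Koszul complexes, and simultaneously isolates the $d^l$-contributions cleanly into the $\UO A$-factor. The single-arity hypothesis on $\CB$ is essential here: it ensures that distinct slots do not interact through the $\sym$-action on $\CB$, which is exactly what permits the slotwise reduction to the elementary Koszul complex $\CO^\antis\circ_\kappa\CO$.
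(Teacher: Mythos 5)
Your third step is, in substance, the paper's own proof: filter so that $\id\otimes d_\kappa$ survives to the associated graded while $d^l$ strictly drops filtration (the paper does this by the $V$-weight of $\CM\circ A\cong\CM\circ\CO\circ V$), identify the associated graded with $\UO A\otimes\CB\circ(\CO^\antis\circ_\kappa A)\quis\UO A\otimes\CB\circ V$ using freeness of the right coaction, Koszulness of $\CO$ and freeness of $A$, and conclude by noting that $E^1$ lives in the single column $p=r$ because $\CB$ is concentrated in arity $r$, so the spectral sequence collapses. That part is correct and suffices by itself.

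The gap is in your first and second steps. The hypothesis gives $\CM\cong\CB\circ\CO^\antis$ only as a \emph{right} $\CO^\antis$-comodule; the left coaction is separate data, constrained only by the bicomodule compatibility diagrams, and need not be ``the left coaction on the $\CO^\antis$ factor.'' In the intended application (Theorem~\ref{thmFree}), $\CM$ is a filtration quotient $\CQ^\antis(1-p)\circ\CP^\antis$ whose left coaction is governed by the distributive law and genuinely mixes the two tensor factors. Consequently your claim that $d^l$ vanishes on the image of $\phi$ --- the claim that makes $\phi$ a chain map --- is not a ``direct computation'' from the hypotheses: $\kappa$ does kill the right-coaction terms on $\CB\circ\CI\circ V$, so $\id\otimes d_\kappa$ vanishes there, but nothing forces the reduced left coaction $\CM\rightarrow\overline{\CO^\antis}\cco\CM$ to vanish on the sub-$\sym$-module $\CB\circ\CI$. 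Since your announced strategy is to exhibit $\phi$ as an explicit quasi-isomorphism, this is a real hole. It is, however, a removable one: the single-column collapse in your third step already identifies $H_\ast(\LLO^\CM A)$ with $\UO A\otimes\CB\circ V$ with no reference to $\phi$, and discarding $\phi$ while keeping the filtration argument reproduces the paper's proof exactly.
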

\begin{proof}
The cotangent complex $\LLO^\CM A$ is defined to be $\UO A \otimes \CM\circ_\kappa A$ with an added differential $d_l$.
We filter this complex using the weight of $V$ in $\CM\circ A \cong \CM\circ \CO \circ V$, precisely
\[
F_p\LLO^\CM A := \UO A \otimes \bigoplus_{k\leq p}(\CM\circ\CO)(k)\circ V.
\]
This is a filtration of complexes because the differential associated to $\CM\circ_\kappa\CO$ preserves the arity and hence the weight of $V$ and the added differential $d_l$ properly reduces the weight of $\CM\circ A$.
Therefore the associated graded module is isomorphic to
\[
\UO A \otimes \CM \circ_\kappa A.
\]
By Lemma~\ref{lemma_barlemma} this is in turn isomorphic to $\UO A \otimes \CB \circ (\CO^\antis\circ_\kappa A)$ which is quasi-isomorphic to $\UO A\otimes \CB \circ V$ since $A$ is freely generated by $V$ and \(\CO\) is Koszul.
Note that this is concentrated in a single weight $r$ of $V$ because $\CB$ is concentrated in arity $r$.

The spectral sequence associated to the above filtration has the form 
\[
E^1_{p\bullet} = \begin{cases}
\UO A \otimes \CB(r) \circ V & \text{ if $p=r$ and}\\
0 & \text{ otherwise.}
\end{cases}
\]
Hence the $E^1$ page is concentrated in a single column and so the spectral sequence collapses and we are done.
\end{proof}

\subsection{The main lemma}
\label{section_mainlemma}
We begin with our relative setup again.
Let $f:\CO\rightarrow \CP$ be a morphism of Koszul operads (taking generators to generators) and let \mbox{$g:\CC\rightarrow\CD$} be the corresponding morphism of Koszul dual cooperads.  Then there is a commuting square
\begin{equation}\label{eq_OPCDsquare}
\xymatrix{
\CO \ar[r]^f & \CP  \\
\CC \ar[u]^\kappa \ar[r]_g & \CD \ar[u]_(.54){\kappa'},
}\end{equation}
where the vertical arrows are twisting morphisms.
Let $\CM$ be a $\CC$-comodule.  Then it is naturally a $\CD$-comodule denoted $g^\ast \CM$.
Let $A$ be a $\CP$-algebra; the $\CO$-algebra is denoted $f^\ast A$.

\begin{lemma}\label{lemma_cotangentlemma}
There is an isomorphism of $\UP A$-modules
\[
\UP A \otimes_{\UO\!f^\ast\!A} \LLO^\CM f^\ast A \;\cong\; \LLP^{g^\ast \CM} A.
\]
\end{lemma}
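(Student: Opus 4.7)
The plan is to verify the isomorphism explicitly by identifying the underlying graded $\UP A$-modules on both sides, and then checking that the two differentials agree. On the left, $\UP A \otimes_{\UO f^\ast\! A}\LLO^\CM f^\ast A$ has as its underlying graded module
\[
\UP A \otimes_{\UO f^\ast\! A} \bbrac{\UO f^\ast A \otimes (\CM\circ A)} \,\cong\, \UP A \otimes (\CM\circ A),
\]
which matches the underlying graded module $\UP A\otimes (g^\ast \CM \circ A)$ of the right-hand side, since $g^\ast\CM$ equals $\CM$ on the level of $\sym$-modules and $f^\ast A$ equals $A$ on the level of graded algebras.

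The total differentials decompose in both cases as $d^l + \id \otimes d_\kappa$ and $d^l + \id \otimes d_{\kappa'}$ respectively. For the $d_\kappa$-type piece, Lemma~\ref{lemma_barlemma} applied to $\CM$ and $A$ yields $\CM\circ_\kappa f^\ast A \cong g^\ast\CM\circ_{\kappa'} A$; tensoring with $\UP A$ over the base field transports this directly into the required equality $\id\otimes d_\kappa = \id \otimes d_{\kappa'}$ under our identification.

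The main work lies in the $d^l$ piece. Following the defining diagram~\eqref{eq_Mcotangent}, the differential on the left, restricted to a generator in $1\otimes 1 \otimes (\CM\circ A)$, traces through $\Delta^L_\CC$, then $\kappa$, then the projection $\pi_A$ into $\UO f^\ast A$, and finally $\Uf A$ into $\UP A$. On the right it goes $\Delta^L_\CD$, then $\kappa'$, then $\pi_A$ into $\UP A$. These composites agree because: (i) by definition of $g^\ast\CM$, its left $\CD$-coaction factors as $(g\cco\id)\circ\Delta^L_\CC$; (ii) the commuting square~\eqref{eq_OPCDsquare} gives $\kappa'\circ g = f\circ \kappa$; and (iii) the defining diagram~\eqref{eq_UfA} of $\Uf A$ shows $\Uf A \circ \pi_A = \pi_A \circ (f\circ \id)$ on $\CO\circ(A;\CI)$. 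Chaining these three facts shows the two versions of $d^l$ coincide on generators; since both are obtained by extension via the respective associative-algebra action (and $\Uf A$ is an algebra morphism, so the base change carries $\UO f^\ast A$-linear extension to $\UP A$-linear extension), they coincide everywhere.

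I expect the main obstacle to be the bookkeeping for the natural identifications $(\CC\cco\CM)\circ A \cong \CC\circ(A;\CI)\otimes(\CM\circ A)$ and the analogous one for $\CD$, and ensuring that the extension of the generator-level differential across the tensor product $\UP A\otimes_{\UO f^\ast A}(-)$ is well defined. Both concerns are dispatched by the functoriality of the infinitesimal composite and the algebra-morphism property of $\Uf A$, so no additional ingredients beyond those already in place for Lemma~\ref{lemma_LLM} and the construction of $\Uf A$ should be needed.
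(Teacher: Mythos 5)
Your proposal is correct and follows essentially the same route as the paper: both reduce to the free modules $\UP A\otimes(\CM\circ_\kappa f^\ast A)$ and $\UP A\otimes(g^\ast\CM\circ_{\kappa'}A)$, invoke Lemma~\ref{lemma_barlemma} to identify them as chain complexes, and then verify that the two versions of $d^l$ agree on generators by chaining exactly the three commuting squares you list (compatibility of the coaction with $g$, the square~\eqref{eq_OPCDsquare}, and the defining diagram~\eqref{eq_UfA} of $\Uf A$) -- the paper simply assembles these into a single ladder diagram.
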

\begin{proof}
The $\UO f^\ast A$-module $\LLO^\CM f^\ast A$ is based on the free module \mbox{$\UO\! f^\ast\! A\otimes\!(\CM\!\circ_\kappa\!\! f^\ast\! A)$}, with an added differential.
Hence $\UP A \otimes_{\UO\!f^\ast\!A} \LLO^\CM f^\ast A$ is based on the free $\UP A$-module 
\[
\UP A \otimes (\CM\!\circ_\kappa\! f^\ast A),
\]
with differential induced by 
\begin{equation}\label{eq_map1}\xymatrix{
\CM\circ_\kappa\! f^\ast A \ar[r] & \UO f^\ast A \otimes (\CM\circ_\kappa\! f^\ast A) \ar[r] & \UP A \otimes (\CM\circ_\kappa\! f^\ast A),
}\end{equation}
where the left arrow is given by~\eqref{eq_Mcotangent} and the right arrow is given by $\Uf A \otimes \id$.

Similarly the $\UP A$-module $\LLP^{g^\ast\!\CM}\!A$ is based on the free module \mbox{$\UP\! A\! \otimes\! (g^\ast\!\CM\!\circ_{\kappa'}\! A)$} with differential given by the extension of a map
\begin{equation}\label{eq_map2}\xymatrix{
g^\ast\!\CM\circ_{\kappa'} A \ar[r] & \UP A \otimes (g^\ast\! \CM\circ_{\kappa'}\! A),
}\end{equation}
again from~\eqref{eq_Mcotangent}.
By Lemma~\ref{lemma_barlemma}, there is an isomorphism of chain complexes
\[
\CM\circ_{\kappa}\! f^\ast\! A \;\cong\; g^\ast\!\CM\circ_{\kappa'}\! A.
\]
So it remains to prove that the maps~\eqref{eq_map1} and~\eqref{eq_map2} are equal.
Consider the diagram 
\[\xymatrix@C=50pt{
\CM\circ A  \ar[dr]^{\Delta^L_{g^\ast \CM}\circ \id} \ar[d]_{\Delta^L_\CM\circ \id} & \\
(\CC\cco\CM)\circ A \ar[r]_{(g\cco\id)\circ\id}\ar[d]_\cong  &  (\CD\cco\CM)\circ A \ar[d]^\cong \\
\CC\circ(A;\CI)\otimes(\CM\circ A) \ar[r]_{g\circ(\id;\id)\otimes\id}\ar[d]_{\kappa\circ(\id;\id)\otimes\id}  &  \CD\circ(A;\CI)\otimes(\CM\circ A) \ar[d]^{\kappa'\circ(\id;\id)\otimes\id} \\
\CO\circ(A;\CI)\otimes(\CM\circ A) \ar[r]_{f\circ(\id;\id)\otimes\id}\ar[d]_{\pi_A\otimes\id}  &  \CP\circ(A;\CI)\otimes(\CM\circ A) \ar[d]^{\pi_A\otimes\id} \\
\UO f^\ast\! A \otimes (\CM\circ A) \ar[r]_{\Uf A\otimes\id}  &  \UP A \otimes (\CM\circ A).
}\]
The left edge is precisely~\eqref{eq_Mcotangent} applied to the $\CC$-comodule $\CM$ and the $\CO$-algebra $f^\ast A$; so with the bottom edge we get~\eqref{eq_map1}.
Meanwhile the top diagonal edge and right edges are given by~\eqref{eq_Mcotangent} applied to the $\CD$-comodule~$g^\ast \CM$ and the $\CP$-algebra~$A$ and so are equal to~\eqref{eq_map2}.
To finish we show that each square commutes.

The top square consists of vertical isomorphisms which are compatible with $g$ and so this square commutes.
The middle square is given by~\eqref{eq_OPCDsquare} with \mbox{$(-)\circ(\id;\id)\otimes\id$} applied and so this square commutes.
The final square is given by applying \mbox{$(-)\otimes\id$} to a square in~\eqref{eq_UfA} which is used to define $\Uf A$ and so commutes.
\end{proof}

\subsection{Transfer of structure from cooperad comodules to (co)homology theories}
We saw in Section~\ref{section_cotangentcomplex} that the generalised cotangent complex $\LLO^{(-)}A$ is a right exact functor from $\CC$-comodules to left $\UO A$-modules.
Then in Section~\ref{section_mainlemma} we saw that the cotangent complex of a $\CP$-algebra viewed as an $\CO$-algebra and after a change of coefficients to $\UP A$ is an image of that functor.
Precisely it is the image of $\CC$ viewed as a $\CD$-comodule.
Hence structure in the category of $\CD$-comodules is reflected in the cotangent complex.
The next theorem makes use of this fact in a number of useful situations.
\begin{theorem}
\label{thm_cohomologyhodge}
Let $f:\CO\rightarrow\CP$ be a map of Koszul operads with $g:\CC\rightarrow\CD$ the corresponding map of Koszul dual cooperads.
Let $A$ be a $\CP$-algebra and $f^\ast A$ be its associated $\CO$-algebra.
Furthermore let $M$ be a left $\UP A$-module and $N$ be a right $\UP A$-module, we write $f^\ast M$ and $f^\ast N$ for their respective $\UO f^\ast A$-module structures.
\begin{enumerate}
\item[(I)] We have the following expressions for the homology and cohomology of $f^\ast A$ with coefficients in $f^\ast\! N$ and $f^\ast\! M$ respectively,
\begin{equation}\label{eq_Thomology}
H^\CO_\ast(f^\ast A, f^\ast N) \cong H_\ast(N\otimes_{\UP A} \LLP^{g^\ast \CC} A) 
\end{equation}
and
\begin{equation}\label{eq_Tcohomology}
H_\CO^\ast(f^\ast A, f^\ast M) \cong H^\ast(\Hom_{\UP A}(\LLP^{g^\ast \CC} A, M)),
\end{equation}
where $g^\ast \CC$ is the cooperad $\CC$ viewed as a $\CD$-comodule.
\item[(II)]
Suppose further that $g^\ast\CC$ decomposes as a $\CD$-comodule into a direct sum, $g^\ast\CC\cong \bigoplus_{i\in I}\CM_i$.
Then the homology decomposes
\begin{equation}\label{eq_Thomologydecomp}
H^\CO_\ast(f^\ast A, f^\ast N) \cong \bigoplus_{i\in I} H_\ast(N\otimes_{\UP A} \LLP^{\CM_i} A)
\end{equation}
and similarly for the cohomology
\begin{equation}\label{eq_Tcohomologydecomp}
H_\CO^\ast(f^\ast A, f^\ast M) \cong \prod_{i\in I} H^\ast(\Hom_{\UP A}(\LLP^{\CM_i} A, M)).
\end{equation}
\item[(III)]
  Suppose now that $A$ is concentrated in non-negative degrees and that $g^\ast\CC$ possesses a filtration, $\cbrac{\CM_i}_{i\in \mathbb{Z}}$ of $\CD$-comodules
\begin{equation}\label{eq_filtration}\xymatrix{
    \ldots \ar[r]^{a_{i-2}} & \CM_{i-1} \ar[r]^{a_{i-1}} & \CM_i \ar[r]^{a_{i+1}} & \CM_{i+1} \ar[r]^{a_{i+2}} & \ldots
}\end{equation}
such that 
\begin{enumerate}
\item each map $a_i$ is injective and their colimit is $g^\ast\CC$,
\item each quotient comodule $\CM_i/\CM_{i-1}$ is contained in differential degree~$|i|$ and greater.
\end{enumerate}
Then if $N$ is non-negatively graded there exists a spectral sequence converging to the homology $H^\CO_\ast\!(f^\ast\! A, f^\ast\! N)$ with 
\begin{equation}\label{eq_homSS}
  E^1_{p\bullet} = H_{\bullet-p}(N\otimes_{\UP A}\LLP^{\CM_{p} / \CM_{p-1}}A)  
\end{equation}
and if $M$ is non-positively graded then there exists a spectral sequence converging to the cohomology $H_\CO^\ast(f^\ast\! A, f^\ast\! M)$ with 
\begin{equation}\label{eq_cohomSS}
E_1^{p\bullet} = H^{\bullet-p}(\Hom_{\UP A}(\LLP^{\CM_{p} / \CM_{p-1}}A, M)).  
\end{equation}
\end{enumerate} 
\end{theorem}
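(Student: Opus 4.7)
Part (I) is an immediate consequence of Lemma~\ref{lemma_cotangentlemma} applied in the special case $\CM = \CC$, the cooperad $\CC$ viewed as a right comodule over itself; for this choice $\LLO^\CC f^\ast A$ coincides with the usual cotangent complex $\LLO f^\ast A$, and the lemma yields $\UP A \otimes_{\UO f^\ast A} \LLO f^\ast A \cong \LLP^{g^\ast\CC} A$. The two formulas~\eqref{eq_Thomology} and~\eqref{eq_Tcohomology} then fall out of the standard extension-of-scalars identities
\[
f^\ast N \otimes_{\UO f^\ast A}(-) \;\cong\; N \otimes_{\UP A}\bigl(\UP A \otimes_{\UO f^\ast A}(-)\bigr),
\]
and $\Hom_{\UO f^\ast A}(-, f^\ast M) \cong \Hom_{\UP A}\bigl(\UP A \otimes_{\UO f^\ast A} -,\, M\bigr)$ applied to the complex $\LLO f^\ast A$.

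For part (II) I would observe that, as a graded $\UP A$-module, $\LLP^\CM A$ is literally $\UP A \otimes (\CM \circ A)$, so $\LLP^{(-)}A$ is additive and commutes with direct sums; the defining differential $d^l$ of~\eqref{eq_Mcotangent} is natural in $\CM$ and therefore respects any splitting. Tensoring over $\UP A$ with $N$ and then taking homology both preserve direct sums, giving~\eqref{eq_Thomologydecomp}; contravariance of $\Hom$ converts the direct sum into the direct product appearing in~\eqref{eq_Tcohomologydecomp}.

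For part (III) I would push the filtration of $g^\ast\CC$ through $\LLP^{(-)}A$ to obtain an increasing filtration $F_p = \LLP^{\CM_p}A \hookrightarrow \LLP^{g^\ast\CC}A$ by differential $\UP A$-submodules. Since each $\LLP^{\CM_p}A$ is free as a graded $\UP A$-module, $N \otimes_{\UP A}(-)$ is exact on this filtration and produces a filtration of chain complexes whose associated graded is $N \otimes_{\UP A}\LLP^{\CM_p/\CM_{p-1}}A$; the standard spectral sequence of a filtered complex then delivers~\eqref{eq_homSS}. For~\eqref{eq_cohomSS} I would dualise, working instead with the decreasing filtration
\[
F^p \;=\; \Hom_{\UP A}\bigl(\LLP^{g^\ast\CC/\CM_{p-1}}A,\, M\bigr) \;\hookrightarrow\; \Hom_{\UP A}\bigl(\LLP^{g^\ast\CC}A,\, M\bigr),
\]
whose graded pieces are the cochain complexes $\Hom_{\UP A}(\LLP^{\CM_p/\CM_{p-1}}A, M)$.

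The main obstacle is strong convergence of the two spectral sequences; the degree hypotheses are tailored precisely for this. With $A$ and $N$ non-negatively graded (respectively $M$ non-positively graded) and $\CM_p/\CM_{p-1}$ supported in degrees $\geq |p|$, in any fixed total (co)homological degree $n$ only finitely many consecutive levels $\CM_p$ can contribute a nonzero summand, so the induced filtration on $C_n$ is bounded both above and below. Strong convergence of both spectral sequences then follows from the classical spectral sequence of a bounded filtered complex, and the rest of the argument is essentially formal given Lemmas~\ref{lemma_barlemma}, \ref{lemma_LLM} and~\ref{lemma_cotangentlemma}.
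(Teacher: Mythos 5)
Your proposal is correct and follows essentially the same route as the paper: part (I) via Lemma~\ref{lemma_cotangentlemma} with the change-of-coefficients adjunctions, part (II) via additivity of $\LLP^{(-)}A$ from Lemma~\ref{lemma_LLM}, and part (III) via pushing the comodule filtration through $\LLP^{(-)}A$ and using the degree hypotheses for convergence. The only cosmetic difference is that for the cohomology spectral sequence you use the decreasing filtration by $\Hom$'s out of the quotient complexes, whereas the paper works with the induced cofiltration and identifies the kernels $\ker(b_p)$ with $\Hom_{\UP A}(\LLP^{\CM_p/\CM_{p-1}}A,M)$ -- these are equivalent formulations.
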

\begin{proof}
\begin{enumerate}
\item[(I)]
For the homology, 
\begin{align*}
H_\ast^{\CO}(f^\ast A, f^\ast N) &= H_\ast(f^\ast N \otimes_{\UO f^\ast\!A} \LLO f^\ast\! A) \\
&\cong H_\ast(N \otimes_{\UP A} (\UP A \otimes_{\UO f^\ast\!A} \LLO f^\ast\! A)) \\
&\cong H_\ast(N \otimes_{\UP A} \LLP^{g^\ast \CC} A),
\end{align*}
where the first line is the definition of the homology groups, the second is a standard manipulation of tensor products and the final line is by Lemma~\ref{lemma_cotangentlemma}.
The cohomology follows a similar argument:
\begin{align*}
H_\CO^\ast(f^\ast A, f^\ast M) &= H^\ast(\Hom_{\UO f^\ast\! A}(\LLO f^\ast A, f^\ast M)) \\
&\cong H^\ast(\Hom_{\UP A}(\UP A \otimes_{\UO f^\ast\! A} \LLO f^\ast A, M)) \\
&\cong H^\ast(\Hom_{\UP A}(\LLP^{g^\ast\CC} A, M)),
\end{align*}
where again the first line is the definition of the cohomology groups, the second line now uses an adjunction involving change of coefficients and the final line is by Lemma~\ref{lemma_cotangentlemma} again.
\item[(II)]
By Lemma~\ref{lemma_LLM} the functor $\LLP^{(-)}A$ preserves coproducts so
\[
\LLP^{g^\ast\!\CM}A \;\cong\; \bigoplus_{i\in I} \LLP^{\CM_i}A.
\]
Using this identity in equations~\eqref{eq_Thomology} and~\eqref{eq_Tcohomology} from part~(I) we get~\eqref{eq_Thomologydecomp} and~\eqref{eq_Tcohomologydecomp} respectively.
\item[(III)]
Applying the functor $\LLP^{(-)}A$ to the filtration~\eqref{eq_filtration} gives a filtration of $\UP A$-modules.
\[\xymatrix{
  \ldots \ar[r] & \LLP^{\CM_{i-1}}A \ar[r] &\LLP^{\CM_i}A \ar[r] &\LLP^{\CM_{i+1}}A \ar[r] & \ldots
}\]
As the functor $\LLP^{(-)}A$ is right exact it preserves cokernels and so
\begin{equation}\label{eq_LLquotient}
  \LLP^{M_{p}}\!A \;/\; \LLP^{M_{p-1}}\!A \;\cong\; \LLP^{\CM_p / \CM_{p-1}}\!A.
\end{equation}
The union of this filtration is $\LLP^{g^\ast\CC}\!A$ and the non-negative grading of $A$ along with the degree conditions on $\CM_i$ imply that the $\UP A$-module $\LLP^{\CM_i/\CM_{i-1}} \! A$ is concentrated in degree $|i|$ and above.

We now apply the functors $N\otimes_{\UP A}(-)$ and $\Hom_{\UP A}(-,M)$.
In the first case the functor $N\otimes_{\UP A}\!(-)$ gives a filtration of chain complexes
\[\xymatrix{
  \ldots \ar[r] & N\otimes_{\UP A}\!\LLP^{\CM_i}\!A \ar[r] &N\otimes_{\UP A}\!\LLP^{\CM_{i+1}}\!A \ar[r] & \ldots
}\]
Since the functor is right exact the quotients are of the correct form, $N\otimes_{\UP A}\!\LLP^{\CM_p / \CM_{p-1}}\!A$ and the degree conditions on $N$ and $\LLP^{\CM_i/\CM_{i-1}}\!A$ imply that the spectral sequence does converge to the homology with given first page~\eqref{eq_homSS}.

In the second case we obtain a cofiltration
\[\xymatrix{
\ldots &\Hom_{\UP A}(\LLP^{\CM_{i-1}}A, M) \ar[l] &\Hom_{\UP A}(\LLP^{\CM_{i}}A, M) \ar[l] & \ldots \ar[l]
}\]
The first page of the cohomology spectral sequence has $E_1^{p\bullet} = H^{\bullet-p}(\ker(b_p))$, and 
\[
\ker(b_p) \cong \Hom_{\UP A}(\LLP^{\CM_p}\!A \,/\, \LLP^{\CM_{p-1}}\!A, M)
\cong \Hom_{\UP A}(\LLP^{\CM_p / \CM_{p-1}}\! A, M),
\]
where the second isomorphism arises from~\eqref{eq_LLquotient}, this provides~\eqref{eq_cohomSS}.
The degree condition on $\LLP^{\CM_i/\CM_{i-1}}\!A$ and the fact that $M$ is non-positively graded imply that $\Hom_{\UP A}(\LLP^{\CM_{i}/\CM_{i-1}}A, M)$ is concentrated in (cohomological) degrees $|i|$ and above. 
The limit of the cofiltration is $\Hom_{\UP A}(\LLP^{g^\ast\!\CC}\!A,M)$.
The degree condition implies that the associated spectral sequence converges as expected.
\end{enumerate}
\end{proof}

\begin{example}
As discussed in Remark~\ref{rem_HH} the cohomology theory of an associative algebra largely agrees with the Hochschild cohomology, similarly the homology theory largely agrees with the Hochschild homology.
The Koszul dual of the projection from $\As$ to $\Com$ is the inclusion of $\Lie$ into $\As$.
In Sections~\ref{section_fdls} and~\ref{section_equivalence} we will see that the associative operad viewed as a bimodule over the Lie operad has a filtration and that in characteristic~0 this filtration splits.

So in characteristic~0, part~(II) of the above theorem gives a decomposition of the Hochschild (co)homology of a commutative algebra $A$ with coefficients in some symmetric bimodule.
We will show in Section~\ref{section_equivalence} that this decomposition agrees with the classical decomposition~\cite{Gerstenhaber1987}.
\end{example}

\subsection{An application}
It was observed in~\cite{Koszul1950} that for an associative algebra $A$, the Hochschild complex of $A$ contains the Chevalley-Eilenberg complex of the Lie algebra $(A, [a,b]=ab-ba)$ as a sub-complex.
This generalises as follows.

Let $f:\CO\rightarrow\CP$ be a map of Koszul operads which induces an injection $g:\CC\rightarrow\CD$ on Koszul dual cooperads.  Let $A$ be a $\CP$-algebra with right $\UP A$-module $N$.
The map $g^\ast\CC\rightarrow \CD$ is an injection of $\CD$-comodules so we have an injection
\[
N\otimes_{\UP A} \LLP^{g^\ast\CC} A \hookrightarrow N\otimes_{\UP A} \LLP A.
\]
By Part~(I) of Theorem~\ref{thm_cohomologyhodge} the first complex computes the 
$\CO$-algebra homology of $f^\ast A$ with coefficients in $f^\ast N$ while the second complex computes the $\CP$-algebra homology of $A$ with coefficient in $N$.

Applying this to the map of operads $i:\Lie\rightarrow\As$ with an associative algebra $A$ and bimodule $M$ we find that the dual of $i$ is the inclusion $\Sigma\coCom\rightarrow\Sigma\coAs$ and there is an inclusion of complexes
\[
i^\ast M\otimes_{\UU_\Lie i^\ast A} \LL_\Lie i^\ast A \hookrightarrow M\otimes_{\UU_\As A} \LL_\As A,
\]
the first calculating the homology of the Lie algebra $i^\ast A$ with coefficients in $i^\ast M$ and the second the homology of the associative algebra $A$ with coefficients in $M$.
\begin{example}
Let $f:\PLie\rightarrow\Dend$ be the map of operads which encodes the fact that a dendriform algebra is naturally a pre-Lie algebra.  On Koszul dual cooperads we have an injection $\Sigma\coPerm\rightarrow\Sigma\coDias$.
Hence for a dendriform algebra $A$ there is a natural inclusion of the complex computing the homology of $A$ as a pre-Lie algebra into the complex computing the homology as a dendriform algebra.
This was described explicitly by Goichot~\cite{Goichot}.
\end{example}

\section{Operads with filtered distributive laws and their modules}
\label{section_fdls}
In the first half of this paper we related the (co)homology theories of algebras over a Koszul operad to comodules over the Koszul cooperad.
To apply these results in practice we require precise knowledge of the comodules themselves.
This knowledge may take the form of a filtration of comodules, leading to a spectral sequence computing the (co)homology.
We proceed in this section to look at a method for finding examples of such filtrations.

Rather than working with comodules for cooperads we will work with modules for operads simply because they are more familiar to calculate with.

\subsection{Filtrations of operads and their modules}\label{section_filtrations}
A \emph{filtration} of an operad $\CO$ is a series of sub-$\sym$-modules $(F_i\CO)_{i\in\mathbb{\mathbb{Z}}}\subseteq \CO$ such that
\begin{enumerate}
\item if $i\leq j$ then $F_i\CO \subseteq F_j\CO$ and
\item under the bilinear multiplication map, $\mu^1_\CO(F_i\CO\cco F_j\CO) \subseteq F_{i+j}\CO$.
\end{enumerate}
With this definition $F_0\CO$ is a suboperad of $\CO$ and has both a left and right action on each $F_s\CO$.
Thus the operad filtration is also a filtration of $F_0\CO$-bimodules.

The associated graded $\sym$-module $\gr_F\CO$ defined by $(\gr_F\CO)_s = F_s\CO / F_{s-1}\CO$ for $s\in\mathbb{Z}$ inherits an operad structure and we call $\gr_F\CO$ the \emph{associated graded operad}.
There is a morphism of operads
\[
F_0\CO \rightarrow F_0\CO / F_{-1}\CO \hookrightarrow \gr_F\CO
\]
and the inherited $F_0\CO$-bimodule structure on $\gr_F\CO$ agrees with the structure obtained by taking the associated graded of the $F_0\CO$-bimodule filtration of $\CO$.

\begin{example}[A positive filtration]
Let $g:\CQ\hookrightarrow\CO$ be an inclusion of operads and let $\CV$ be a sub-\(\sym\)-module of $\CO$, contained in arities 2 and greater, which together with $\CQ$ generates the whole operad $\CO$.
Define a filtration of $\CO$ by
\[
F_s\CO = \begin{cases}
0 & \text{ if $s < 0$, }\\
\CQ & \text{ if $s=0$ and}\\
\mu_\CO\bbrac{\CF_{\leq s}(\CQ\oplus \CV)} & \text{ if $s> 0$,}
\end{cases}\]
where $\CF_{\leq s}(\CQ\oplus\CV)$ is the sub-$\sym$-module of the free operad on $\CQ\oplus\CV$ spanned by terms containing no more than $s$ copies of $\CV$.
The fact that $\CF_{\leq s}(\CQ\oplus\CV)$ gives a filtration of the free operad on $\CQ\oplus\CV$ means that $F_s\CO$ is a filtration of $\CO$.
Since $F_0\CO = \CQ$ this operad filtration gives a filtration of $\CQ$-bimodules for $\CO$.
Along with $\CQ$ the $\sym$-module $\CV$ generates $\CO$ and so the filtration is full and locally finite in the sense that for each~$n$ there exists~$N$ such that~$m\geq N$ implies that $F_m\CO(n) = \CO(n)$.
\end{example}

\begin{example}[A negative filtration]
Let $f:\CO\rightarrow\CP$ be a surjection of operads, then the kernel $\CJ=\ker(f)$ is a two-sided ideal of $\CO$.
Define $\CJ^1 = \CJ$ and $\CJ^{i+1} = \mu_\CO(\CJ^i\cco\CJ)$; then $\CJ^i$ is the linear span of elements of $\CO$ obtainable by multiplying $i$ elements of $\CJ$.
Thus defining
\[
G_t\CO = \begin{cases}
\CJ^{-t} & \text{ if $t< 0$ and}\\
\CO & \text{ if $t\geq 0$,}
\end{cases}\]
we have an operad filtration of $\CO$.
It is also a filtration of bimodules for $G_0\CO = \CO$ and the ideal $G_{-1}\CO=\CJ$ acts trivially on the associated graded bimodule,~$\gr_G\CO$.
Hence the $\CO$-bimodule structure is inherited from a $\CP$-bimodule structure given equivalently by the inclusion of operads $\CP=(\gr_G\CO)_0\hookrightarrow\gr_G\CO$.

If $\CO$ and $\CP$ are generated by binary operations then $\CJ=\ker(f)$ is contained in arities~$2$ and greater.
Therefore $\CJ^{i}$ is contained in arities $i+1$ and greater and so
\[
\bigcap_{t\in\mathbb{Z}} G_t\CO = \bigcap_{i\geq 1} \CJ^i = 0.
\]
Also the filtration is locally finite in the sense that for each~$n$ there exists~$N$ such that~$m\leq N$ implies that $G_m\CO(n) = 0$.
\end{example}

\subsection{Filtered distributive laws}
In this section we will recall the definition of an operad with a filtered distributive law.
For any such operad we define both a positive and a negative filtration, obtaining two results about the bimodule structure.
These will be used in the following section to give two spectral sequences for (co)homology theories over these operads.

Distributive laws between quadratic operads were introduced in~\cite{Fox1997} and have since been generalised in a number of stages. 
The filtered distributive laws we use were defined in~\cite{Dotsenko2011fdls}, where they were applied to prove Koszulness of a number of operads, some of which we revisit below.

We start with the definition of a filtered distributive law.
Let $\CP=\CF(\CV)/(\CR)$ and $\CQ=\CF(\CW)/(\CS)$ be two quadratic operads and let
\begin{align}
 s: \CR \rightarrow & (\CW\cco\CV) \oplus (\CV \cco \CW) \oplus (\CW\cco\CW) \\
 d: (\CW\cco\CV) \rightarrow & (\CV\cco\CW) \oplus (\CW\cco\CW)
\end{align}
be maps of $\sym$-modules.
Define
\begin{align}
 \CT = & \cbrac{x - s(x) \mid x\in \CR} \subseteq (\CV\oplus\CW)\cco(\CV\oplus\CW)\\
 \CD = & \cbrac{x - d(x) \mid x\in \CW\cco\CV} \subseteq (\CV\oplus\CW)\cco(\CV\oplus\CW).
\end{align}
Then we define $\CO$ to be the quadratic operad with generators $\CV\oplus \CW$ and relations $\CT\oplus\CD\oplus\CS$.
Since $\CS$ is included in the space of relations it is clear that there is a map $g:\CQ\rightarrow\CO$ given by inclusion of generators.  
There is also a projection~$f:\CO\rightarrow\CP$ given by sending the generators~$\CW$ to~0.  To see that this is well defined, observe that the map $\CF(\CV\oplus\CW)\rightarrow\CF(\CV)$ sends~$\CT$ to~$\CR$ while~$\CD$ and~$\CS$ are both mapped to~$0$.

Since the relations arising from $\CD$ allow any element in $\CW\circ\CV$ to be rewritten in $(\CV\circ\CW)\oplus(\CW\circ\CW)$, the composite map 
\begin{equation}\label{eq_zetacomposite}
\CF(\CV) \circ \CF(\CW)\hookrightarrow \CF(\CV\oplus\CW) \rightarrow \CO
\end{equation}
is a surjection.  If the projection of operads $f:\CO\rightarrow\CP$ splits in the category of $\sym$-modules then we may define the map
\begin{equation}
\zeta:\CP\circ \CQ \rightarrow \CO,
\end{equation}
which is also a surjection.
\begin{definition}
The 4-tuple $(\CP,\CQ,s,d)$ constitutes a \emph{filtered distributive law} for~$\CO$ if the map $f:\CO\rightarrow\CP$ splits in the category of $\sym$-modules and if $\zeta$ induces an isomorphism when restricted to the weight~$3$ elements of $\CP\circ\CQ$.
\end{definition}
In Theorem~5.1 of~\cite{Dotsenko2011fdls} it was shown that under these conditions the whole map~$\zeta$ is in fact an isomorphism of $\sym$-modules.
The following strengthening of this theorem to consider the right $\CQ$-module structure of $\CO$ is immediate.
\begin{corollary}\label{cor_rightfilt}
Consider $\CP\circ\CQ$ as the free right $\CQ$-module with generators $\CP$.  Then if $\CO$ has a filtered distributive law the map $\zeta$ is an isomorphism of right $\CQ$-modules.
\end{corollary}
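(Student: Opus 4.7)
The plan is to reduce the statement to unpacking the definition of $\zeta$, since Theorem~5.1 of~\cite{Dotsenko2011fdls} already provides the underlying $\sym$-module isomorphism; it only remains to upgrade it to a morphism of right $\CQ$-modules.

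First I would make $\zeta$ explicit. Pick any section $\tilde{s}:\CP\to\CO$ of $f$ in $\SM$, which exists by hypothesis. Combined with the operadic inclusion $g:\CQ\hookrightarrow\CO$ and the operad multiplication $\mu_\CO$, this gives
\[
\zeta \;:\; \CP\circ\CQ \xrightarrow{\tilde{s}\,\circ\, g} \CO\circ\CO \xrightarrow{\mu_\CO} \CO.
\]
The right $\CQ$-module structure on $\CP\circ\CQ$ is the free one, namely $\id_\CP\circ\mu_\CQ$ precomposed with the associator $(\CP\circ\CQ)\circ\CQ \cong \CP\circ(\CQ\circ\CQ)$, whilst the right $\CQ$-module structure on $\CO$ (inherited from $g$) is $\mu_\CO\circ(\id_\CO\circ g)$.

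The key step is then to verify commutativity of the square
\[\xymatrix{
(\CP\circ\CQ)\circ\CQ \ar[r]^(.55){\zeta\circ\id} \ar[d]_{\id\circ\mu_\CQ} & \CO\circ\CQ \ar[d]^{\mu_\CO\circ(\id\circ g)} \\
\CP\circ\CQ \ar[r]_\zeta & \CO,
}\]
under the natural identification of $(\CP\circ\CQ)\circ\CQ$ with $\CP\circ(\CQ\circ\CQ)$. I would do this by a direct diagram chase, exploiting two facts: associativity of $\mu_\CO$, which permits the $\CQ$-inputs to be grouped and multiplied before being composed with the lift of the $\CP$-factor, and the fact that $g$ is an operad morphism, so $g\circ\mu_\CQ = \mu_\CO\circ(g\circ g)$. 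Tracing a typical element $p\otimes(q_i)_i \otimes (q'_{i,j})_{i,j}$ along either route then produces the common value $\mu_\CO\bbrac{\tilde{s}(p);\,g(\mu_\CQ(q_i,q'_{i,\bullet}))_i}$. I do not anticipate any genuine obstacle here; the point is that the right $\CQ$-action never interacts with $\tilde{s}$, so the argument does not require $\tilde{s}$ to be an operad map, only an $\sym$-module section. Once the square is shown to commute, the bijectivity granted by Theorem~5.1 of~\cite{Dotsenko2011fdls} promotes $\zeta$ to an isomorphism of right $\CQ$-modules.
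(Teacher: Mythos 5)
Your proposal is correct and follows exactly the route the paper intends: the paper offers no written proof, declaring the corollary ``immediate'' from the $\sym$-module isomorphism of Theorem~5.1 of Dotsenko--Khoroshkin, and your argument is precisely the routine verification behind that word --- $\zeta$ intertwines the free right $\CQ$-action with the one on $\CO$ restricted along $g$, by associativity of $\mu_\CO$ and the fact that $g$ is an operad morphism, so the known bijectivity upgrades to an isomorphism of right $\CQ$-modules. Nothing is missing.
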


\begin{remark}
When the map $s$ is $0$ and the map $d'$ has image in $\CV\cco\CW$ then a filtered distributive law is equivalent to a distributive law as defined in~\cite{Fox1997}.
\end{remark}

\subsection{The associated graded operad from a positive filtration}
We will now assume that both $\CV$ and $\CW$ are concentrated in arity~$2$ so that $\CP$ and $\CQ$ are binary quadratic operads.

Using the identification of $\CO$ with $\CP\circ\CQ$, define a filtration of $\sym$-modules by
\[
F_s\CO = \bigoplus_{0< p\leq s+1}\CP(p)\circ\CQ.
\]
\begin{lemma}\label{lemma_assocgradedoperad}
The filtration of $\sym$-modules $(F_s\CO)_{s\in\mathbb{Z}}$ is an operad filtration of~$\CO$.
The associated graded operad has the distributive law~$(\CP,\CQ, 0, d')$ where $d'$ is the corestriction of $d$ to $\CV\cco\CW$.
\end{lemma}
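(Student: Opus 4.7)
The plan is to transfer the problem to $\CP \circ \CQ$ via the isomorphism $\zeta$ of Corollary~\ref{cor_rightfilt}, and then analyse the quadratic presentation $\CO = \CF(\CV \oplus \CW) / (\CT \oplus \CD \oplus \CS)$. Under $\zeta$ the subspace $F_s\CO$ corresponds to the part of $\CP \circ \CQ$ whose $\CP$-factor has arity at most $s+1$, which (since $\CV$ is binary, so $\CP(p)$ has internal $\CV$-weight $p-1$) is exactly the image of the $\CV$-weight-$\leq s$ part of $\CF(\CV) \circ \CF(\CW)$.

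For the first claim I would observe that the free operad $\CF(\CV \oplus \CW)$ carries a $\CV$-weight grading that is additive under operadic composition, and that each defining relation of $\CO$ equates a homogeneous element with a sum of elements of weakly lower $\CV$-weight: $\CT$-relations rewrite weight-$2$ elements of $\CR$ as weight-$\leq 1$ elements via $s$; $\CD$-relations rewrite weight-$1$ elements as weight-$\leq 1$ elements via $d$; and $\CS$-relations are entirely weight-$0$. Consequently the $\CV$-weight filtration descends to a multiplicative filtration on $\CO$, and this is exactly $F_\bullet \CO$ (using also that the $\CD$-rewrites allow any weight-$\leq s$ element of $\CO$ to be represented inside $\CF(\CV) \circ \CF(\CW)$ with its weight preserved).

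For the second claim I would pass each defining relation to its top-$\CV$-weight summand: the $\CT$-relations reduce to $\CR$ (the $s$-part disappears in the associated graded), the $\CD$-relations reduce to $\{x - d'(x) : x \in \CW \cco \CV\}$ where $d'$ is the corestriction of $d$ to $\CV \cco \CW$, and the $\CS$-relations are unchanged. These are precisely the defining relations of the operad $\CO'$ associated to the classical distributive law $(\CP, \CQ, 0, d')$, so there is a canonical surjection of operads $\CO' \twoheadrightarrow \gr_F \CO$. To promote this to an isomorphism I would compare dimensions: $\gr_F \CO \cong \CP \circ \CQ$ from the definition of $F_\bullet$ together with $\zeta$, while $\CO'$ admits a natural surjection $\CP \circ \CQ \twoheadrightarrow \CO'$ obtained exactly as in~\eqref{eq_zetacomposite} (since the relations $\{x - d'(x)\}$ still permit any element of $\CW \circ \CV$ to be rewritten inside $\CV \circ \CW$, and $\CR, \CS$ are among the defining relations). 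The two inequalities pin down $\dim_k \CO'(n) = \dim_k \gr_F \CO(n)$ for every $n$, forcing the surjection $\CO' \twoheadrightarrow \gr_F \CO$ to be an isomorphism of operads.

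The subtle step, and the one the whole argument hinges on, will be ruling out any spurious (possibly non-quadratic) relations appearing in $\gr_F \CO$ beyond those inherited from the top-weight parts of $\CT \oplus \CD \oplus \CS$. This is precisely what the theory of distributive laws is designed to handle: Corollary~\ref{cor_rightfilt}, applied on both sides, cleanly bypasses the issue by forcing the two candidates to have the same underlying $\sym$-module $\CP \circ \CQ$.
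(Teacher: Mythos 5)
Your proposal is correct and follows essentially the same route as the paper: identify $F_s\CO$ with the image of the $\CV$-weight filtration on the free operad (using $d$ to rewrite into $\CF(\CV)\circ\CF(\CW)$), read off the leading $\CV$-weight terms of the relations to obtain the presentation $(\CP,\CQ,0,d')$ for $\gr_F\CO$, and rule out further relations by comparing underlying $\sym$-modules with $\CP\circ\CQ$. The only cosmetic difference is in closing the last step, where the paper checks the distributive-law condition in arity $4$ and invokes $\CO'\cong\CP\circ\CQ$, while you sandwich $\CO'$ between the surjections $\CP\circ\CQ\twoheadrightarrow\CO'\twoheadrightarrow\gr_F\CO$ and count dimensions.
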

\begin{proof}
The free operad $\CF(\CV\oplus\CW)$ is filtered by $\CF_{\leq s}(\CV\oplus\CW)$, the sub-$\sym$-module spanned by elements with no more than $s$ copies of $\CV$.
Such elements may be rewritten using the map $d$ to lie in $\CF_{\leq s}(\CV)\circ \CF(\CW)$ and under the map~\eqref{eq_zetacomposite} this surjects onto $F_s\CO$.

Now we calculate the associated graded operad by showing that the arity~2 operations generate an operad satisfying a distributive law.
In arity~2 we have
\[
\gr_F\CO(2) = F_0\CO(2) \oplus (F_1\CO/F_0\CO)(2) \cong \CQ(2) \oplus \CP(2) \cong \CW \oplus \CV.
\]
And in arity~3,
\begin{align*}
\gr_F\CO(3) &= F_0\CO(3) \oplus (F_1\CO/F_0\CO)(3) \oplus (F_2\CO/F_1\CO)(3) \\
&\cong\CQ(3) \oplus (\CV\cco\CW) \oplus \CP(3).
\end{align*}
From this we may read off the relations by noting that the image of two copies of $F_0\CO(2)$ must lie in $F_0\CO(3)$; 
the image of one copy of $F_0\CO(2)$ and one copy of $F_1\CO(2)/F_0\CO(2)$ must lie in $F_1\CO(3)/F_0\CO(3)$; and the image of two copies of $F_1\CO(2)/F_0\CO(2)$ must lie in $F_2\CO(3)/F_1\CO(3)$.
Thus we find the relations from $\CQ$ in the first case, relations given by the corestriction of $d$, denoted $d'$ from the second case and the relations from $\CP$ in the third case.
That $d'$ defines a distributive law is immediate because we may check that $\gr_F\CO(4)$ is isomorphic to $\CP\circ\CQ(4)$.
We know that there can be no other relations because the operad defined by~$(\CP,\CQ,0,d')$ is isomorphic to $\CP\circ\CQ$ as an $\sym$-module which is also the underlying $\sym$-module of~$\gr_F\CO$.
\end{proof}

\begin{definition}
We say that a filtered distributive law $(\CP,\CQ,s,d)$ is \emph{semi-filtered} if~$d$ has image in $\CV\circ\CW$.
\end{definition}

\begin{corollary}\label{cor_posfilt}
There is a positive filtration of $\CO$ as a $\CQ$-bimodule with graded pieces $\CP(p)\circ\CQ$ which are free as a right module with left module structure provided by the corestriction of $d$ to $\CV\cco\CW$.
Furthermore if $(\CP,\CQ,d,s)$ is semi-filtered then the filtration of $\CQ$-bimodules splits and so~$\CO$ is isomorphic to~$\gr_F\CO$ as a~$\CQ$-bimodule.
\end{corollary}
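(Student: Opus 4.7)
My plan is to read each assertion off the existing infrastructure: the operad filtration $F_\bullet\CO$ from Lemma~\ref{lemma_assocgradedoperad} together with the right-$\CQ$-module isomorphism $\zeta:\CP\circ\CQ\xrightarrow{\sim}\CO$ of Corollary~\ref{cor_rightfilt}. Since $F_0\CO = \CQ$ is a suboperad of $\CO$, the operad filtration is automatically a filtration by $\CQ$-bimodules; under $\zeta$ it corresponds to $F_s\CO \cong \bigoplus_{0<p\leq s+1}\CP(p)\circ\CQ$, so the successive quotients are $\CP(p)\circ\CQ$, free as right $\CQ$-modules on $\CP(p)$ by the right-module half of Corollary~\ref{cor_rightfilt}.

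The substantive step is to identify the induced left $\CQ$-action on each $\CP(p)\circ\CQ$. It suffices to describe the action of generators $w\in\CW$. Starting from an element of $\CP(p)\circ\CQ\subseteq\CO$ containing $p$ copies of $\CV$, left-multiplying by $w$ and rewriting back into $\CP\circ\CQ$ form amounts to repeatedly pushing $w$ past $\CV$-generators using $d$. Each such application of $d$ replaces a $\CW\cco\CV$ pattern by an element of $(\CV\cco\CW)\oplus(\CW\cco\CW)$: the $\CV\cco\CW$ piece preserves the total count of $\CV$'s while the $\CW\cco\CW$ piece strictly lowers it. Modulo $F_{p-2}\CO$ the result therefore lies in $\CP(p)\circ\CQ$, and the induced action is the one built solely from the corestriction $d':\CW\cco\CV\to\CV\cco\CW$, the multiplication of $\CQ$ on itself, and associativity.

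The second part then follows by observing that the semi-filtered hypothesis kills the $\CW\cco\CW$ component of $d$ outright. The same rewriting calculation now delivers an exact equality rather than a congruence modulo lower filtration: left multiplication by any $w\in\CW$ carries $\CP(p)\circ\CQ$ into itself inside $\CO$. Since $\CW$ generates $\CQ$ as an operad, associativity propagates this to the whole left $\CQ$-action, so each $\CP(p)\circ\CQ$ is a $\CQ$-subbimodule of $\CO$. Summing over $p$ yields a direct-sum decomposition $\CO\cong\bigoplus_{p\geq 1}\CP(p)\circ\CQ$ of $\CQ$-bimodules, which is precisely the required splitting, and the right-hand side is $\gr_F\CO$ by Lemma~\ref{lemma_assocgradedoperad}.

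The main obstacle will be the bookkeeping in the second paragraph: after one application of $d$ the newly produced $\CW$-generator must itself be transported past the remaining $\CV$-generators by further applications of $d$, and one must verify that at every intermediate stage any term with fewer $\CV$'s lies in strictly lower filtration. This is a filtered refinement of the combinatorics underlying the proof of Theorem~5.1 of~\cite{Dotsenko2011fdls}, and I would carry it out by induction on $p$, tracking the number of $\CV$-generators remaining to the right of the roving $\CW$ at each stage.
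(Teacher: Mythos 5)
Your argument follows the paper's own proof of this corollary: the first part is read off from Lemma~\ref{lemma_assocgradedoperad} (the positive filtration with associated graded distributive law $(\CP,\CQ,0,d')$) together with Corollary~\ref{cor_rightfilt}, and the second part rests on exactly the observation the paper uses, namely that in the semi-filtered case $d=d'$, so the left $\CQ$-action --- computed by pushing $\CW$-generators past $\CV$-generators via $d$ --- preserves the number of $\CV$-generators and hence each graded piece $\CP(p)\circ\CQ$. The paper states the splitting even more tersely (both $\CO$ and $\gr_F\CO$ are isomorphic to $\CP\circ\CQ$ as $\sym$-modules with left action governed by the same $d$), so your rewriting bookkeeping is a legitimate expansion of the same idea rather than a different route.
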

\begin{proof}
The first part is immediate from Lemma~\ref{lemma_assocgradedoperad} and the discussions of Section~\ref{section_filtrations}.
When the law is semi-filtered $d$ already has image in $\CV\cco\CW$ and so taking the corestriction does not change $d$.
The fact that both operads $\CO$ and $\gr_F\CO$ are isomorphic to $\CP\circ\CQ$ as $\sym$-modules gives the isomorphism of $\CQ$-bimodules.
\end{proof}
\begin{example}\label{ex_aslie}
In characteristic~$0$ the associative operad has a semi-filtered distributive law $(\Com,\Lie,s,d)$ meaning that as a $\Lie$-bimodule it decomposes into pieces $i^\ast \As \cong \oplus_{p\geq 0}\Com(p) \circ \Lie$.
The associated graded operad is the operad describing Poisson algebras.
\end{example}

\subsection{The associated graded operad from a negative filtration}
Define another $\sym$-module filtration by
\[
G_t\CO(n) = \begin{cases}
\CO(n) & \text{ if $t\geq 0$ and}\\
\bigoplus_{0 < p \leq n+t}\bbrac{\CP(p)\circ\CQ}(n) & \text{ if $t<0$.}
\end{cases}\]
This may be interpreted as saying that for elements in $G_{-s}\CO$ the weight contributed by $\CQ$ must not be less than $s$.
\begin{lemma}
The filtration of $\sym$-modules $(G_s\CO)_{s\in\mathbb{Z}}$ defines an operad filtration of~$\CO$.
The associated graded operad has the distributive law~$(\CP,\CQ, 0, d')$ where~$d'$ is the corestriction of~$d$ to~$\CV\cco\CW$.
\end{lemma}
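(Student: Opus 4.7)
The plan is to follow the strategy of Lemma~\ref{lemma_assocgradedoperad}. Since each $G_t\CO$ is by construction a direct sum of summands of $\CP\circ\CQ$ under the $\sym$-equivariant isomorphism $\zeta$, it is automatically a sub-$\sym$-module of $\CO$. What needs to be checked is the multiplicativity condition $\mu^1_\CO(G_s\CO\cco G_t\CO)\subseteq G_{s+t}\CO$, which is trivial when $s,t\geq 0$ and must be proved when at least one of them is negative.

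To establish multiplicativity I would lift to the free operad $\CF(\CV\oplus\CW)$ and track the \emph{$\CW$-count}, namely the number of $\CW$-generators appearing in a tree expression. An element of $\CP(p)\circ\CQ$ at arity $n$ has $\CW$-count $n-p$, so $G_{-s}\CO$ consists of those elements whose normal-form representative has $\CW$-count at least $s$. The $\CW$-count is additive under free-operad composition, and the three families of relations used to pass to $\CO$ never decrease it: the $\CS$-relations preserve it (both sides are pure $\CW$), the $\CT$-relations rewrite a $2$-$\CV$ term as elements of $(\CW\cco\CV)\oplus(\CV\cco\CW)\oplus(\CW\cco\CW)$ and so strictly increase it, and the $\CD$-relations rewrite $\CW\cco\CV$ as elements of $(\CV\cco\CW)\oplus(\CW\cco\CW)$ and so preserve or increase it. Combining additivity in the free operad with the monotonicity of rewriting yields the required inclusion.

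For the associated graded operad, one repeats the low-arity computation of the preceding lemma: in arity $2$ the graded pieces are $\CV$ in grade $0$ and $\CW$ in grade~$1$; in arity $3$ the three graded pieces are $\CP(3)$, $\CV\cco\CW$ and $\CQ(3)$. Reading off the quadratic relations, the corrective term $s(\CR)$ vanishes in the associated graded because it has strictly higher $\CW$-count than $\CR$, while the $\CW\cco\CW$ component of $d$ vanishes for the same reason, leaving only the corestriction $d'$; the $\CS$-relations persist unchanged. These are precisely the relations of the operad with distributive law $(\CP,\CQ,0,d')$, and the $\sym$-module isomorphism $\gr_G\CO\cong\CP\circ\CQ$ rules out any further relations.

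The main obstacle is the multiplicativity step. The intuition that rewriting cannot decrease $\CW$-count is transparent from inspecting the three families of relations, but making it rigorous requires treating normal forms carefully and ultimately depends on the filtered distributive law hypothesis, which via Theorem~5.1 of~\cite{Dotsenko2011fdls} guarantees that $\zeta$ is an isomorphism so that each congruence class has a well-defined $\CW$-count.
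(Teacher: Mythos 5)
Your proposal is correct and follows essentially the same route as the paper: both arguments lift to the free operad, observe that the relations $\CS$, $\CT$, $\CD$ can only preserve or increase the number of $\CW$-generators (so the $\CW$-count filtration descends to $\CO$), read off the associated graded relations from the arity~$2$ and~$3$ pieces, and use the $\sym$-module isomorphism $\gr_G\CO\cong\CP\circ\CQ$ to exclude further relations. The only slip is indexing: in the paper's (negative) convention $\CW$ sits in filtration degree $-1$, not $1$, though this does not affect the substance of the argument.
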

\begin{proof}
The free operad $\CF(\CV\oplus\CW)$ is negatively filtered by $\CF_{\geq t}(\CV\oplus\CW)$, the sub-$\sym$-module spanned by elements with no less than $t$ copies of $\CW$.
Using the maps $d$ such elements may be rewritten to lie in $\CF(\CV)\circ\CF_{\geq t}(\CW)$ since $d$ can only increase the number of copies of $\CW$.
Under the map~\eqref{eq_zetacomposite} this term of the filtration surjects onto~$G_{-t}\CO$.

Again we look at $\gr_G\CO$ in arities $2$ and $3$.
We find that
\[
\gr_G\CO(2) = (G_0\CO / G_{-1}\CO)(2) \oplus (G_{-1}\CO/G_{-2}\CO)(2) \cong \CV\oplus\CW
\]
and
\begin{align*}
\gr_G\CO(3) &= (G_0\CO / G_{-1}\CO)(3) \oplus (G_{-1}\CO/G_{-2}\CO)(3) \oplus (G_{-2}\CO/G_{-3}\CO)(3) \\
&\cong \CP(3) \oplus (\CV\cco\CW) \oplus \CQ(3).
\end{align*}
The image of the operad multiplication on two copies of $(\gr_G\CO)_0(2)$ must lie in~$(\gr_G\CO)_0(3)$;
the image of one copy of~$(\gr_G\CO)_0(2)$ and one copy of~$(\gr_G\CO)_{-1}(2)$ must lie in~$(\gr_G\CO)_{-1}(3)$;
while two copies of~$(\gr_G\CO)_{-1}(2)$ are taken to~$(\gr_G\CO)_{-2}(3)$.
Thus we see that~$\gr_G\CO$ is generated by~$\CV\oplus\CW$ with relations including those of~$\CP$, those of~$\CQ$ along with a distributive law provided by~$d'$.
That this is a well-defined distributive law is checked by looking at~$(\gr_G\CO)(4)\cong (\CP\circ\CQ)(4)$ and that there are no further relations by noting that~$(\gr_G\CO)(n) \cong (\CP\circ\CQ)(n)$.
\end{proof}
The positive and negative filtrations give associated graded operads which are isomorphic as ungraded operads.
However not only the gradings disagree; the results for modules are very different.
\begin{corollary}\label{cor_negfilt}
There is a filtration of $\CO$ as a bimodule over itself with graded pieces $(\gr_G\CO)_{-t}(n) = \bbrac{\CP(n-t)\circ\CQ}(n)$ on which the suboperad $\CQ$ acts trivially.
Thus the left and right $\CO$-actions are given by the pullback of a $\CP$-bimodule structure along $f:\CO\rightarrow\CP$.
Over $\CP$ the associative graded bimodule, $\gr_G\CO$ is free as a left module and has the right module structure provided by the corestriction of $d$ to $\CV\cco\CW$.
\end{corollary}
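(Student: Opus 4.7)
The plan is to deduce this corollary directly from the preceding lemma together with the general framework of Section~\ref{section_filtrations}; essentially everything is already in place, and only bookkeeping remains.

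First I would invoke the preceding lemma: $(G_t\CO)_{t\in\mathbb{Z}}$ is an operad filtration whose associated graded operad carries the distributive law $(\CP,\CQ,0,d')$, so $\gr_G\CO \cong \CP\circ\CQ$ as $\sym$-modules, graded by $\CW$-weight. Counting weights, an element of $(\CP(p)\circ\CQ)(n)$ contributes $\CW$-weight $n-p$, so setting $t=n-p$ identifies $(\gr_G\CO)_{-t}(n)$ with $(\CP(n-t)\circ\CQ)(n)$ as claimed.

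Next I would apply the generality from Section~\ref{section_filtrations} that any operad filtration is automatically a filtration by $G_0\CO$-bimodules. Here $G_0\CO = \CO$, and $G_{-1}\CO$ is precisely the two-sided ideal $\ker f$, since it consists of the elements of strictly positive $\CW$-weight, which are exactly those killed by $f$. Multiplication by an element of $G_{-1}\CO$ strictly lowers the filtration degree, so acts by zero on each $(\gr_G\CO)_{-t} = G_{-t}\CO/G_{-t-1}\CO$. Hence the $\CO$-bimodule structure on $\gr_G\CO$ descends through $\CO/\ker f \cong \CP$ to a $\CP$-bimodule structure pulled back along $f$; in particular the augmentation ideal of the suboperad $\CQ \subseteq \CO$, which lies in $G_{-1}\CO$, acts trivially.

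Finally I would read off the $\CP$-bimodule structure from the operad composition in $\gr_G\CO \cong \CP\circ\CQ$. The left $\CP$-action composes into the leftmost $\CP$-factor, so $\gr_G\CO$ is manifestly free as a left $\CP$-module with generators $\CQ$. For the right $\CP$-action, inserting a $\CV$-operation into a slot occupied by a $\CW$-operation produces an element of $\CW\cco\CV$ which must be rewritten back into $\CP\circ\CQ$; by the distributive law $(\CP,\CQ,0,d')$ identified in the lemma, the only available rewriting is the corestriction $d'$ of $d$ to $\CV\cco\CW$, which is therefore precisely the right $\CP$-action. No step presents a real obstacle, since the substantive content was already captured in the preceding lemma's identification of the associated graded operad; the corollary merely repackages that identification at the level of bimodules.
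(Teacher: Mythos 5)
Your proposal is correct and follows essentially the same route as the paper's proof: it invokes the general fact from Section~\ref{section_filtrations} that an operad filtration is a filtration of $G_0\CO$-bimodules, identifies $G_{-1}\CO$ with $\ker f$ to conclude that $\CQ$ acts trivially and that the structure descends to a $\CP$-bimodule pulled back along $f$, and reads off left-freeness and the right action via $d'$ from the associated graded distributive law. The extra bookkeeping you supply (the $\CW$-weight count identifying the graded pieces, and the observation that multiplication by $G_{-1}\CO$ strictly lowers filtration degree) is a harmless elaboration of what the paper leaves implicit.
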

\begin{proof}
By the discussion in Section~\ref{section_filtrations} the operad filtration is also a filtration of bimodules for $G_0\CO\cong \CO$.
The action of $\CO$ on the associated graded bimodule is inherited from the action of the suboperad $(\gr_G\CO)_0$ on $\gr_G\CO$.
Since $G_{-1}\CO$ is the kernel of $f:\CO\rightarrow\CP$ we find that the action of $\CQ$ on $\gr_G\CO$ is trivial.

The graded piece $(\gr_G\CO)_{t}$ consists of those pieces of $\CP\circ\CQ$ with weight~$-t$ contributed by the $\CQ$ terms and with no restriction on $\CP$, thus is free as a left $\CP$-module, while the right action comes from the distributive law~$(\CP,\CQ, 0, d')$ for $\gr_G\CO$ where $d'$ is the corestriction of $d$ to $\CV\cco\CW$.
\end{proof}

\begin{example}
In Example~\ref{ex_aslie} the positive filtration was used to calculate the $\Lie$-bimodule structure of the associative operad in characteristic~$0$ when it had the filtered distributive law~$(\Com,\Lie,s,d)$. 
With the negative filtration one has a filtration of $\As$ as a bimodule over itself and the associative graded bimodule is isomorphic to the operad for Poisson algebras with an action of $\As$ given by the embedding of the commutative operad, $\Com$.
\end{example}

\subsection{Applications to cohomology theories}
In this section we use the operadic module theory just discussed to study (co)homology theories by applying Theorem~\ref{thm_cohomologyhodge}.
In~\cite{Dotsenko2011fdls} it was shown that if $\CO$ has the filtered distributive law~$(\CP,\CQ,s,d)$ for Koszul operads~$\CP$ and~$\CQ$ then~$\CO$ is Koszul and the Koszul dual operad~$\CO^!$ has the law~$(\CQ^!,\CP^!,s^!,d^!)$.

For the rest of the section we will assume that all operads are binary, quadratic and Koszul and that the field~$k$ has characteristic~0.
Each of the corollaries~\ref{cor_rightfilt},~\ref{cor_posfilt} and~\ref{cor_negfilt} has an analogue for cooperads proved either by repeating the proofs in the coalgebraic framework or by taking graded linear duals with the assumption that the generating $\sym$-modules~$\CV$ and~$\CW$ are finite dimensional in each arity.

Instead of the positive filtration of~\ref{cor_posfilt} one has a negative filtration with
\begin{equation}\label{eq_conegfilt}
F_t\CO^\antis = \begin{cases}
\CO^\antis & \text{ if $t \geq 0$ and}\\
\bigoplus_{p > -t}\CQ^\antis(p)\circ\CP^\antis & \text{ if $t < 0$.}
\end{cases}\end{equation}
To see how this filtration may be obtained from the positive filtration of the operad~$\CO^!$, we note that taking the linear dual of~$F_{t-1}\CO\hookrightarrow F_t\CO$ gives a surjection~$(F_{t-1}\CO)^\ast \leftarrow (F_{t}\CO)^\ast$ and a decreasing filtration.
To obtain an increasing filtration one takes the kernel of the map~$\pi_t:(\CO^!)^\ast \rightarrow (F_{t}\CO)^\ast$ for each~$t$.
This yields an injection~$\ker(\pi_t)\hookrightarrow\ker(\pi_{t-1})$ and to obtain the filtration of cooperads one now reindexes the $t$ to obtain a negative cooperad filtration.

In a similar manner, instead of the negative filtration of~\ref{cor_negfilt} one has a positive filtration with
\begin{equation}\label{eq_coposfilt}
G_t\CO^\antis(n) = \begin{cases}
\bigoplus_{p \geq n-t} \bbrac{\CQ^\antis(p)\circ\CP^\antis}(n) & \text{ if $t\geq 0$ and}\\
0 & \text{ if $t<0$.}
\end{cases}\end{equation}
Equivalently $G_t\CO^\antis$ is the subspace of~$\CO^\antis\cong\CQ^\antis\circ\CP^\antis$ with the weight provided by~$\CP^\antis$ no more than~$t$.
\begin{definition}
We say that a filtered distributive law $(\CP,\CQ,s,d)$ is \emph{op-semi-filtered} if the Koszul dual law $(\CQ^!, \CP^!, s', d')$ is semi-filtered.
\end{definition}

First we consider the bar homology.
\begin{proposition}
\label{prop_barhodge}
Let $\CO$ be an operad with filtered distributive law $(\CP, \CQ, s, d)$ and let $A$ be a $\CP$-algebra.
Then 
\begin{equation}
B_\CO(f^\ast A) \cong \CQ^\antis \circ B_\CP(A).
\end{equation}
\end{proposition}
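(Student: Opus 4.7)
The plan is to reduce this directly to part~(II) of Theorem~\ref{thm_barhodge}. With $\CC=\CO^\antis$ and $\CD=\CP^\antis$, that theorem gives $B_\CO(f^\ast A)\cong \CB\circ B_\CP(A)$ as soon as we can exhibit a generating $\sym$-module $\CB$ for which $g^\ast\CO^\antis\cong\CB\circ\CP^\antis$ as a right $\CP^\antis$-comodule. The whole task therefore boils down to producing such an isomorphism with $\CB=\CQ^\antis$.

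To obtain this cooperadic splitting I would pass through Koszul duality. By the main theorem of~\cite{Dotsenko2011fdls}, if $\CO$ carries the filtered distributive law $(\CP,\CQ,s,d)$ then the Koszul dual operad $\CO^!$ carries the law $(\CQ^!,\CP^!,s^!,d^!)$. Apply Corollary~\ref{cor_rightfilt} to this Koszul dual law: since $\CP^!$ plays the role of the ``$\CQ$-factor'' in the dual law, one obtains an isomorphism $\CO^!\cong \CQ^!\circ\CP^!$ of right $\CP^!$-modules, with the right action on the right-hand side being the free right $\CP^!$-module structure coming from composition with $\CP^!$.

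Now dualize. Under the finite-dimensionality hypothesis recalled just before the proposition, the analogue for cooperads of Corollary~\ref{cor_rightfilt} (mentioned in the paragraph preceding~\eqref{eq_conegfilt}) yields an isomorphism
\[
g^\ast\CO^\antis \;\cong\; \CQ^\antis\circ\CP^\antis
\]
of right $\CP^\antis$-comodules, with the right-hand side carrying the cofree right $\CP^\antis$-comodule structure induced by the comultiplication of $\CP^\antis$. One must check here that the right-comodule structure induced on $\CO^\antis$ by the cooperad map $g:\CO^\antis\to\CP^\antis$ matches the cofree structure on $\CQ^\antis\circ\CP^\antis$ under the isomorphism; this is the dual of the corresponding compatibility for right modules in Corollary~\ref{cor_rightfilt} and is the only non-formal point.

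With this in hand, apply part~(II) of Theorem~\ref{thm_barhodge} with $\CB=\CQ^\antis$ to conclude
\[
B_\CO(f^\ast A)\;\cong\;\CQ^\antis\circ B_\CP(A).
\]
The main obstacle is really just the dualization step: one needs to verify carefully that the right comodule structure arising from $g$ is the cofree one with cogenerators $\CQ^\antis$, but this follows either by repeating the proof of Corollary~\ref{cor_rightfilt} in the coalgebraic framework or by taking graded linear duals of the module isomorphism for $\CO^!$.
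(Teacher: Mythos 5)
Your proposal is correct and follows essentially the same route as the paper: invoke the Koszul dual filtered distributive law $(\CQ^!,\CP^!,s^!,d^!)$ from~\cite{Dotsenko2011fdls}, apply Corollary~\ref{cor_rightfilt} to get $\CO^!\cong\CQ^!\circ\CP^!$ as right $\CP^!$-modules, dualize to obtain the right $\CP^\antis$-comodule isomorphism $\CO^\antis\cong\CQ^\antis\circ\CP^\antis$, and conclude via part~(II) of Theorem~\ref{thm_barhodge} with $\CB=\CQ^\antis$. Your extra attention to the dualization step (checking that the comodule structure induced by $g$ matches the cofree one) is a point the paper passes over with the single word ``dually,'' so nothing is missing.
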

\begin{proof}
We use the module theory of operads with filtered distributive laws applied to the Koszul dual operad~$\CO^!$.
By Corollary~\ref{cor_rightfilt} the right $\CP^!$-modules~$\CO^!$ and $\CQ^!\circ\CP^!$ are isomorphic.
Dually the right~$\CP^\antis$-comodule~$\CO^\antis$ and the free right $\CP^\antis$-comodule~$\CQ^\antis\circ\CP^\antis$ are also isomorphic.
Applying part (II) of Theorem~\ref{thm_barhodge} with $\CC=\CO^\antis$, $\CD=\CP^\antis$ and $\CB = \CQ^\antis$ gives the result.
\end{proof}
One application of this proposition is to compute the homology of the bar complex of a free $\CP$-algebra viewed as a $\CO$-algebra.
Let $A$ be the free $\CP$-algebra generated by a vector space $V$ then since $\CP$ is Koszul we have $H_\ast(B_\CP(A)) \cong V$ and the corollary shows that $H_\ast(B_\CO(A))\cong \CQ^\antis \circ V$.
Note that for this last isomorphism we require the assumption that~$k$ is of characteristic~$0$.

Now we treat the (co)homology of a $\CP$-algebra~$A$ viewed as an $\CO$-algebra.
Let $M$ be a left $\UP A$-module and $N$ a right $\UP A$-module and as usual denote by $f^\ast M$ and $f^\ast N$ the corresponding $\UO f^\ast A$-modules.
For this we use the positive filtration of~$\CO^!$ or equivalently the negative filtration~\eqref{eq_conegfilt} of~$\CO^\antis$.
The following theorem generalises Theorem~10.2 of~\cite{Fox1997}.
\begin{theorem}
\label{thm_posfiltss}
Suppose that a non-negatively graded operad $\CO$ satisfies a filtered distributive law $(\CP, \CQ, s, d)$ with $\CP$ and $\CQ$ Koszul operads and suppose that~$A$ is a~$\CP$-algebra concentrated in non-negative degrees.
\begin{enumerate}
\item[(I)]
For a right $\UP A$-module $N$ concentrated in non-negative degrees there is a spectral sequence converging to the homology $H^\CO_\ast(f^\ast\! A, f^\ast\! N)$ with
\[
E^1_{p\bullet} = \begin{cases}
H_{\bullet-p}\bbrac{N\otimes_{\UP A} \LLP^{\CQ^\antis(1-p)\circ\CP^\antis} A} & \text{ for $p\leq 0$,}\\
0 & \text{ otherwise.}
\end{cases}
\]
and for a left $\UP A$-module $M$ concentrated in non-positive degrees there is a spectral sequence converging to the cohomology $H_\CO^\ast(f^\ast\! A, f^\ast\! M)$ with
\[
E_1^{p\bullet} = \begin{cases}
H^{\bullet-p}\bbrac{\Hom_{\UP A}(\LLP^{\CQ^\antis(1-p)\circ\CP^\antis} A, M)} & \text{ for $p\leq 0$,}\\
0 & \text{ otherwise.}
\end{cases}
\]
\item[(II)]
If furthermore the distributive law is op-semi-filtered then the homology decomposes
\[
H^\CO_\ast(f^\ast A, f^\ast N) \cong \bigoplus_{n\geq 1} H_\ast\bbrac{N\otimes_{\UP A} \LLP^{\CQ^\antis(n)\circ \CP^\antis} A}
\]
and similarly for the cohomology
\[
H_\CO^\ast(f^\ast A, f^\ast M) \cong \bigoplus_{n\geq 1} H^\ast\bbrac{\Hom_{\UP A}(\LLP^{\CQ^\antis(n)\circ \CP^\antis} A, M)}.
\]
\end{enumerate}
\end{theorem}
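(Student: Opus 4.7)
The strategy is to apply Theorem~\ref{thm_cohomologyhodge} to the filtration~\eqref{eq_conegfilt} of $\CO^\antis$ viewed as a $\CP^\antis$-comodule via $g:\CP^\antis\to\CO^\antis$. Part~(III) of that theorem will produce the two spectral sequences of~(I), and part~(II) will produce the decompositions in~(II); no direct computation with the cotangent complex is required.

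First I would check the hypotheses of Theorem~\ref{thm_cohomologyhodge}\,(III) for $\CM_p:=F_p\CO^\antis$. The inclusions $\CM_{p-1}\hookrightarrow\CM_p$ are obvious; in each fixed arity $n$ the filtration stabilises at $\CO^\antis(n)$ for all $p\geq 0$ and vanishes for $p$ sufficiently negative, so the colimit is $\CO^\antis=g^\ast\CO^\antis$. The quotients are
\[
\CM_p/\CM_{p-1} \cong \CQ^\antis(1-p)\circ\CP^\antis \quad (p\leq 0), \qquad \CM_p/\CM_{p-1}=0 \quad (p>0),
\]
again as $\CP^\antis$-comodules. Since $\CO$ is non-negatively graded and binary quadratic, the Koszul dual cooperad places its arity-$r$ part in homological degree $r-1$; hence $\CQ^\antis(1-p)\circ\CP^\antis$ is supported in arities $\geq 1-p$ and so in degrees $\geq -p=|p|$, verifying the remaining hypothesis. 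Substituting this identification into the formulas of Theorem~\ref{thm_cohomologyhodge}\,(III) yields the spectral sequences of part~(I).

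For part~(II), I would use the assumption that the distributive law is op-semi-filtered, i.e.\ that the Koszul dual law $(\CQ^!,\CP^!,s^!,d^!)$ is semi-filtered. Corollary~\ref{cor_posfilt} then gives a splitting of the positive filtration of $\CO^!$ as a $\CP^!$-bimodule. Passing to graded linear duals (using the characteristic~$0$ and finite-dimensionality hypotheses in force throughout this subsection) transfers this to a splitting of~\eqref{eq_conegfilt} as $\CP^\antis$-comodules, producing a decomposition
\[
g^\ast\CO^\antis \;\cong\; \bigoplus_{n\geq 1}\CQ^\antis(n)\circ\CP^\antis.
\]
Feeding this into part~(II) of Theorem~\ref{thm_cohomologyhodge} yields the claimed direct sum decompositions of $H^\CO_\ast(f^\ast A, f^\ast N)$ and $H_\CO^\ast(f^\ast A, f^\ast M)$.

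The main obstacle is bookkeeping: one must consistently translate the bimodule statements of Corollaries~\ref{cor_rightfilt}--\ref{cor_negfilt} into the corresponding comodule statements for $\CO^\antis$, track the sign flip in the indexing (the positive filtration of $\CO^!$ becomes a negative filtration of $\CO^\antis$), and confirm the differential-degree condition on the associated graded pieces. Once this dictionary is established, both parts reduce to invocations of results already proved in Section~\ref{section_cohomology}.
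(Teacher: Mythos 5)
Your approach is exactly the paper's: part~(I) is obtained by feeding the negative filtration~\eqref{eq_conegfilt} of $g^\ast\CO^\antis$ into part~(III) of Theorem~\ref{thm_cohomologyhodge}, after identifying the quotients as $\CQ^\antis(1-p)\circ\CP^\antis$ and checking the degree condition, and part~(II) is obtained by dualising the semi-filtered splitting of Corollary~\ref{cor_posfilt} and invoking part~(II) of Theorem~\ref{thm_cohomologyhodge}. The hypothesis-checking you outline (colimit, identification of the graded pieces, degree bound $\geq |p|$ coming from $\CQ^\antis(1-p)$ together with the non-negative grading of $\CP^\antis$) matches the paper's proof.

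There is one small but genuine omission in your part~(II). Theorem~\ref{thm_cohomologyhodge}\,(II) gives the cohomology decomposition as a direct \emph{product} $\prod_{i\in I}$, not a direct sum, so ``feeding in'' the comodule splitting does not by itself yield the asserted $\bigoplus_{n\geq 1}$ for $H_\CO^\ast(f^\ast A, f^\ast M)$. You need the extra observation that, because $A$ is concentrated in non-negative degrees, the decomposition of $\LLP^{\CQ^\antis\circ\CP^\antis}A$ is degree-wise finite (only finitely many summands $\LLP^{\CQ^\antis(n)\circ\CP^\antis}A$ contribute in each fixed degree), so the product and the sum coincide. This is a one-line fix, but without it the cohomology half of part~(II) is not established. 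The homology half is unaffected since the homology statement in Theorem~\ref{thm_cohomologyhodge}\,(II) is already a direct sum.
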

\begin{proof}
We apply part~(III) of Theorem~\ref{thm_cohomologyhodge} to the filtration~\eqref{eq_conegfilt} of~$\CO^\antis$ provided by a coalgebraic analogue of Corollary~\ref{cor_posfilt} to find the spectral sequences.
We find that $F_{-t}\CO^\antis / F_{-t-1}\CO^\antis$ is isomorphic to $\CQ^\antis(t+1)\circ\CP^\antis$ for $t\leq 0$ and $0$ for $t > 0$.
Since $\CQ^\antis(t+1)$ has degree greater than or equal to $t$ and $\CP^\antis$ is non-negatively graded the condition on the $\CP^\antis$-comodule filtration holds and guarantees convergence.

In the op-semi-filtered case the Koszul dual~$\CO^!$ is semi-filtered and we apply the same corollary to find that the $\CP^\antis$-comodule $\CO^\antis$ decomposes into a direct sum, so applying part~(II) of Theorem~\ref{thm_cohomologyhodge} gives the homology case.
In the cohomology case the Theorem gives a decomposition as a direct product, however since $A$ is non-negatively graded the decomposition of $\LLP^{\CQ^\antis\circ\CP^\antis} A$ is degree-wise finite and so the decomposition as a direct sum is valid.
\end{proof}

When the $\CP$-algebra is free then whether op-semi-filtered or not both the homology and cohomology decompose and can be calculated explicitly.
\begin{theorem}\label{thmFree}
Let $\CO$ be an operad concentrated in degree 0 with a filtered distributive law $(\CP,\CQ,s,d)$ for Koszul operads $\CP$ and $\CQ$.  Let $A=\CP\circ V$ be a free $\CP$-algebra with generators concentrated in degree~$0$ and let $M$ be a left $\UP A$-module and $N$ a right $\UP A$-module.
Then the homology of $f^\ast A$ with coefficients in $f^\ast N$ is isomorphic to 
\[
N\otimes_k(\CQ^\antis\circ V)
\]
and the cohomology of $f^\ast A$ with coefficients in $f^\ast M$ is isomorphic to 
\[
\Hom_k(\CQ^\antis\circ V, M).
\]
\end{theorem}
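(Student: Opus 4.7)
My plan is to combine three ingredients: the cooperadic analogue of Corollary~\ref{cor_rightfilt}, the decomposition statement Theorem~\ref{thm_cohomologyhodge}(II), and Lemma~\ref{lemma_LLOfreeA} computing the generalised cotangent complex of a free algebra.

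First, the cooperadic version of Corollary~\ref{cor_rightfilt} (available by the dualisation discussion preceding~\eqref{eq_conegfilt}) provides an isomorphism of right $\CP^\antis$-comodules
\[
g^\ast\CO^\antis \;\cong\; \CQ^\antis \circ \CP^\antis.
\]
Since the cofree $\CP^\antis$-coaction acts only on the right factor and preserves the arity of the leading $\CQ^\antis$-factor, this further splits as a direct sum of right $\CP^\antis$-comodules
\[
g^\ast\CO^\antis \;\cong\; \bigoplus_{n\geq 1} \CQ^\antis(n)\circ\CP^\antis,
\]
each summand cofree on the generating $\sym$-module $\CQ^\antis(n)$ concentrated in the single arity~$n$.

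Next I would apply Theorem~\ref{thm_cohomologyhodge}(II) to this decomposition, obtaining
\[
H^\CO_\ast(f^\ast A, f^\ast N) \;\cong\; \bigoplus_{n\geq 1} H_\ast\bbrac{N\otimes_{\UP A}\LLP^{\CQ^\antis(n)\circ\CP^\antis}A}
\]
and the analogous product expression for cohomology. Each summand now meets the hypotheses of Lemma~\ref{lemma_LLOfreeA} with $\CB = \CQ^\antis(n)$: since $A = \CP\circ V$ is free, the lemma yields a quasi-isomorphism of $\UP A$-modules
\[
\LLP^{\CQ^\antis(n)\circ\CP^\antis}A \;\simeq\; \UP A\otimes(\CQ^\antis(n)\circ V).
\]

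The right-hand side is concentrated in a single homological degree: $V$ sits in degree~$0$ and the Koszul dual cooperad $\CQ^\antis$ of a quadratic operad carries trivial internal differential, so $\CQ^\antis(n)\circ V$ has no differential at all. Tensoring with $N$ over $\UP A$ cancels the $\UP A$-factor, giving $N\otimes_k(\CQ^\antis(n)\circ V)$ whose homology is itself, and summing over $n$ produces $N\otimes_k(\CQ^\antis\circ V)$. The cohomological argument is parallel after replacing $N\otimes_{\UP A}(-)$ by $\Hom_{\UP A}(-,M)$; the direct product and the direct sum coincide because $V$ lies in a single degree and only finitely many arities $n$ contribute in each fixed bidegree. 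The main technical point to verify is the compatibility of the quasi-isomorphism of Lemma~\ref{lemma_LLOfreeA} with the change-of-coefficients functors $N\otimes_{\UP A}(-)$ and $\Hom_{\UP A}(-,M)$, but as that quasi-isomorphism is between free $\UP A$-modules this step is routine.
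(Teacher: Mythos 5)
There is a genuine gap at the point where you invoke Theorem~\ref{thm_cohomologyhodge}(II). That theorem requires $g^\ast\CO^\antis$ to decompose as a direct sum of $\CP^\antis$-\emph{comodules} in the paper's sense of the word, i.e.\ bicomodules: the functor $\LLP^{(-)}A$ uses the \emph{left} coaction to build the extra differential $d^l$ in~\eqref{eq_Mcotangent}, so a splitting of right comodules alone does not split the cotangent complex. What the cooperadic analogue of Corollary~\ref{cor_rightfilt} gives you is only an isomorphism of \emph{right} $\CP^\antis$-comodules $g^\ast\CO^\antis\cong\CQ^\antis\circ\CP^\antis$, and while this is indeed a direct sum of cofree right comodules $\CQ^\antis(n)\circ\CP^\antis$, the left coaction (dual to the left action on $\CO^!$ governed by the map $d$) does not in general preserve these summands --- it only preserves the decreasing filtration~\eqref{eq_conegfilt} by the arity of the $\CQ^\antis$-factor. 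The summands are genuinely preserved exactly when the law is op-semi-filtered, which is the extra hypothesis in Theorem~\ref{thm_posfiltss}(II); Theorem~\ref{thmFree} deliberately makes no such assumption, so your argument as written proves a weaker statement than claimed.

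The repair is the route the paper actually takes, and it reuses most of your ingredients: replace ``direct sum plus part~(II)'' by ``filtration plus spectral sequence''. One first reduces, via Lemma~\ref{lemma_cotangentlemma}, to showing that the inclusion $\LLP^{\CQ^\antis}A\cong\UP A\otimes\CQ^\antis\circ V\hookrightarrow\LLP^{g^\ast\CO^\antis}A$ (coming from $\CQ^\antis$ sitting inside $\CO^\antis$ as the coindecomposables, with trivial coproducts) is a quasi-isomorphism. Filtering $\LLP^{g^\ast\CO^\antis}A$ by~\eqref{eq_conegfilt} gives a spectral sequence whose associated graded pieces are $\LLP^{\CQ^\antis(1-p)\circ\CP^\antis}A$; here your application of Lemma~\ref{lemma_LLOfreeA} is exactly right and yields $E^1_{p\bullet}=s^{-p}\,\UP A\otimes\CQ^\antis(1-p)\circ V$ for $p\leq 0$. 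Since $\CQ^\antis(1-p)$ sits in differential degree $-p$ while $V$ and $\UP A$ are in degree~$0$, the $E^1$ page is concentrated in bidegrees $(p,-2p)$, which forces collapse and gives the quasi-isomorphism of free $\UP A$-modules; applying $N\otimes_{\UP A}(-)$ and $\Hom_{\UP A}(-,M)$ then finishes the proof as in your last paragraph.
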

\begin{proof}
We will show that there is a quasi-isomorphism of $A$-modules,
\[
\UP A \otimes_k \CQ^\antis\circ V \quis \UP A \otimes_{\UU_\CO f^\ast A} \LLO f^\ast A,
\]
from which the results immediately follow.
By Lemma~\ref{lemma_cotangentlemma} we have
\[
\UP A\otimes_{\UO f^\ast A} \LLO f^\ast A \;\cong\; \LLP^{g^\ast\CO^\antis}A,
\]
where $g^\ast\CO^\antis$ is the Koszul dual cooperad of $\CO$ viewed as a $\CP^\antis$-comodule.
This comodule contains $\CQ^\antis$ as a sub-$\CP^\antis$-comodule where both left and right coproducts are zero, in fact~$\CQ^\antis$ is the space of coindecomposables of $\CO^\antis$.
From this inclusion of comodules and the inclusion of the generators~$V$ in~$A$ there is a map
\[
% \UP A \otimes \CQ^\antis\circ V \,\hookrightarrow\, \UP A \otimes \CQ^\antis\circ A \,\cong\, \LLP^{\CQ^\antis}A \,\hookrightarrow\, \LLP^{g^\ast\CO^\antis}A
  \UP A \otimes \CQ^\antis\circ V \,\cong\, \LLP^{\CQ^\antis}A \,\hookrightarrow\, \LLP^{g^\ast\CO^\antis}A
\]
and it remains to show that this is a quasi-isomorphism.
We use the filtration~\eqref{eq_conegfilt} of $g^\ast\CO^\antis$ provided by Corollary~\ref{cor_posfilt}.  
This is a negative filtration of $\CP^\antis$-comodules and yields a spectral sequence with 
\[
E^1_{p\bullet} = \begin{cases}
H_{\bullet-p}(\LLP^{F_p\CO^\antis/F_{p-1}\CO^\antis}A) & \text{ for $p\leq 0$ and}\\
0 & \text{ otherwise,}
\end{cases}
\]
where $F_p\CO^\antis/F_{p-1}\CO^\antis\cong\CQ^\antis(1-p)\circ \CP^\antis$ is free as a right $\CP^\antis$-comodule.
Since the generators are concentrated in arity $1-p$ and $A$ is a free $\CP$-algebra we may apply Lemma~\ref{lemma_LLOfreeA} to find
\[
E^1_{p\bullet} = \begin{cases}
s^{-p}\UP A \otimes \CQ^\antis(1-p) \circ V & \text{ for $p\leq 0$ and}\\
0 & \text{ otherwise.}
\end{cases}
\]
Since $\CQ^\antis(1-p)$ is of differential degree $-p$ with $V$ and $\UP A$ both in degree $0$ we find that the first page $E^1$ is $0$ except in bidegrees $(p, -2p)$.  This guarantees the collapse of the spectral sequence proving the quasi-isomorphism.
\end{proof}

\begin{remark}
The result above is true for free~$\CP$-algebras generated by any chain complex~$V$.
The proof is much the same, although instead of working with the free $\CP$-algebra $\CP\circ V$ we instead work with the free $\sym$-module-$\CP$-algebra $\CP\circ \CI \cong \CP$.
This requires using the enveloping algebra and cotangent complex in the category of $\sym$-modules but the proof above works to show that 
\[
\UP \CP \otimes_{\UO f^\ast \CP} \LLO f^\ast \CP \quis \UP\CP \otimes \CQ^\antis
\]
and then to obtain the result for a general space of generators one can apply the functor $(-\circ V)$.
\end{remark}

Next we transfer the module results found in Corollary~\ref{cor_negfilt} for the negative filtration of~$\CO^\antis$ to the~$\CO$-algebra (co)homology.
The associated spectral sequences apply to any~$\CO$-algebra, but the results acheived are less strong than those of Theorem~\ref{thm_posfiltss}; in fact if~$A$ takes the form~$f^\ast B$ for~$B$ a~$\CP$-algebra the differentials on the zeroth page are all zero.

Denote by $h$ the composite morphism
\[
\gr_G\CO \rightarrow \CQ \hookrightarrow \CO
\]
where the first morphism is given by sending $\CV$ to zero, which is well-defined for a distributive law.  
Then any~$\CO$-algebra~$A$ is an algebra~$h^\ast A$ for the associated graded operad~$\gr_G\CO$.
For Koszul dual cooperads we have~$(\gr_G\CO)^\antis \cong \gr_G\CO^\antis$ and will denote by $k$ the map
\[
\gr_G\CO^\antis \rightarrow\CQ^\antis\rightarrow\CO^\antis.
\]
In the following let~$M$ be a left~$\UO A$-module and~$N$ a right~$\UO A$-module.
\begin{theorem}\label{thm_negfiltss}
Suppose that non-negatively graded $\CO$ satisfies a filtered distributive law~$(\CP,\CQ,s,d)$ with~$\CP$ and~$\CQ$ Koszul operads.  
Then for a non-negatively graded~$\CO$-algebra~$A$ there are spectral sequences
\[
E_{p\bullet}^1 = H_{\bullet-p}\bbrac{N\otimes_{\UO A} \LLO^{(\gr_G\CO^\antis)_p}A} \quad\text{ and }\quad 
E_1^{p\bullet} = H^{\bullet-p}\bbrac{\Hom_{\UO A}(\LLO^{(\gr_G\CO^\antis)_p}A, M},
    \]
for $p\geq 0$ and $0$ otherwise.
They converge to~$H_\ast^\CO(A, N)$ and~$H^\ast_\CO(A,M)$ when $N$ is non-negatively graded and $M$ non-positively graded respectively.
There are isomorphisms
\[
E_{}^1 \cong H^{\gr_G\CO}_\ast(h^\ast A, h^\ast N)\quad\text{ and }\quad 
E^{}_1 \cong H_{\gr_G\CO}^\ast(h^\ast A, h^\ast M)
\]
describing the total (co)homology of the first pages in terms of the~$\gr_G\CO$-algebra structure on~$A$.
\end{theorem}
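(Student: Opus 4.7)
The plan is to deduce both spectral sequences from Part~(III) of Theorem~\ref{thm_cohomologyhodge} applied with $f=\id_\CO$ and the filtration~\eqref{eq_coposfilt} of $\CO^\antis$ as a comodule over itself, then to identify the totals of the first pages with the $\gr_G\CO$-algebra (co)homology of $h^\ast\!A$ via Lemma~\ref{lemma_cotangentlemma}.

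\textbf{First step.} By the cooperad analogue of Corollary~\ref{cor_negfilt}, the filtration $(G_t\CO^\antis)_{t\geq 0}$ of~\eqref{eq_coposfilt} is a filtration of $\CO^\antis$ as a comodule over itself with quotients $G_t/G_{t-1}\cong (\gr_G\CO^\antis)_t$, where $(\gr_G\CO^\antis)_t(n)\cong \bbrac{\CQ^\antis(n-t)\circ\CP^\antis}(n)$ vanishes unless $n\geq t+1$. Hypothesis~(a) of Part~(III) is immediate from the formula. For hypothesis~(b), each $\CO^\antis(n)$ is concentrated in homological degree~$n-1$ because $\CO$ is binary quadratic and Koszul, so each quotient $(\gr_G\CO^\antis)_t$ is concentrated in degrees $\geq t=|t|$. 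Part~(III) of Theorem~\ref{thm_cohomologyhodge} then produces the two spectral sequences with the $E^1_{p\bullet}$ and $E_1^{p\bullet}$ pages as stated, convergent under the grading hypotheses on $N$ and $M$.

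\textbf{Second step.} For the identification with $\gr_G\CO$-algebra (co)homology, I apply Lemma~\ref{lemma_cotangentlemma} to the operad map $h:\gr_G\CO\to\CO$ with its Koszul dual cooperad map $k:\gr_G\CO^\antis\to\CO^\antis$, taking the comodule $\CM=\gr_G\CO^\antis$ over itself. This yields
\[
\UO A\otimes_{\UU_{\gr_G\CO}h^\ast\!A}\LL_{\gr_G\CO}h^\ast\!A\;\cong\;\LLO^{k^\ast\gr_G\CO^\antis}A.
\]
The weight decomposition $\gr_G\CO^\antis=\bigoplus_{p\geq 0}(\gr_G\CO^\antis)_p$ is respected by the $\CO^\antis$-coaction pushed forward along~$k$: since $k$ factors through~$\CQ^\antis$, which is the weight-zero part, the coaction cannot mix weights. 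Lemma~\ref{lemma_LLM} (exactness of $\LLO^{(-)}A$) then transports this splitting to the cotangent complex,
\[
\LLO^{k^\ast\gr_G\CO^\antis}A\;\cong\;\bigoplus_{p\geq 0}\LLO^{(\gr_G\CO^\antis)_p}A.
\]
Applying $N\otimes_{\UO A}(-)$ and taking homology computes $H^{\gr_G\CO}_\ast(h^\ast A,h^\ast N)$ on the left (by the standard change-of-rings argument already used in the proof of Theorem~\ref{thm_cohomologyhodge}) and $\bigoplus_p H_{\bullet-p}(N\otimes_{\UO A}\LLO^{(\gr_G\CO^\antis)_p}A)$ on the right, which is the total of~$E^1_{p\bullet}$. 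The cohomology statement is dual; the direct product becomes a direct sum because the non-negative grading of~$A$ makes the decomposition degreewise finite, exactly as in the proof of Theorem~\ref{thm_posfiltss}.

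\textbf{Main obstacle.} The delicate point is verifying that the $\CO^\antis$-coaction on $k^\ast\gr_G\CO^\antis$ really does preserve the weight decomposition, allowing the splitting into the comodule summands $(\gr_G\CO^\antis)_p$. This is the content of the cooperad analogue of Corollary~\ref{cor_negfilt}, which asserts that on $\gr_G\CO^\antis$ the $\CO^\antis$-coaction factors through~$k$ and the suboperad~$\CQ^\antis$, so the $\CP^\antis$-generators act as zero and weight is preserved. Once this analogue is carefully established, the rest of the argument is essentially a combination of the lemmas already proved.
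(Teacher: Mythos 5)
Your proposal is correct and follows essentially the same route as the paper: Part~(III) of Theorem~\ref{thm_cohomologyhodge} applied to the identity morphism with the filtration~\eqref{eq_coposfilt}, then the identification of the total first page via the coalgebraic Corollary~\ref{cor_negfilt} and (in the paper's phrasing) Part~(I) of Theorem~\ref{thm_cohomologyhodge} applied to $h$, which is just Lemma~\ref{lemma_cotangentlemma} plus the change-of-rings step you carry out by hand. Your explicit verification that the coaction on $k^\ast\gr_G\CO^\antis$ preserves the weight decomposition is a detail the paper leaves implicit, not a different argument.
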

\begin{proof}
Using the coalgebraic version of Corollary~\ref{cor_negfilt} the filtration~\eqref{eq_coposfilt} gives a filtration of $\CO^\antis$ viewed as a comodule over itself.
The associated graded~$\CO^\antis$-comodule is isomorphic to $k^\ast \gr_G\CO^\antis$.
The $p$th graded piece, $(\gr_G\CO^\antis)_p$ is isomorphic to the subspace of $\CQ^\antis \circ \CP^\antis$ with weight $p$ contributed by $\CP^\antis$.
Hence $(\gr_G\CO^\antis)_p$ is concentrated in degrees $p$ and above.
Then part~(III) of Theorem~\ref{thm_cohomologyhodge} applies with the identity morphism $\CO\rightarrow\CO$ to give the desired spectral sequences.

For the second part we apply part~(I) of Theorem~\ref{thm_cohomologyhodge} to the morphism $h$ which for the homology gives
\[
 H^{\gr_G\CO}_\ast(h^\ast A, h^\ast N) \cong H_\ast(N \otimes_{\UO A} \LLO^{k^\ast\gr_G\CO^\antis}A),
\]
which is isomorphic to the total (co)homology of $E^1$.  The cohomological version is similar.
\end{proof}
\begin{example}
Let $A$ be an associative algebra, then with the filtered distributive law~$(\Com,\Lie,s,d)$ Theorem~\ref{thm_negfiltss} gives a spectral sequence converging to~$H^\As_\ast(A, M)$ for any bimodule $M$.
To calculate the first page we first note that $\gr_G\As$ is the operad for (non-unital) Poisson algebras and that the associative algebra $A$ is a Poisson algebra, $k^\ast\!A$ with zero commutative product and Lie bracket provided by $[a,b]= ab - ba$.
We may give the Poisson homology of this algebra explicitly in terms of a Chevalley-Eilenberg complex,
\begin{align*}
H^{\text{Pois}}_\ast(h^\ast A, h^\ast M) 
&\cong H^{\text{CE}}_\ast (\Sigma\coLie\circ A, M) \\
&\cong H^{\text{CE}}_\ast(A, M\otimes \Sigma\coCom\circ\Sigma\overline{\coLie}\circ A),
\end{align*}
where in the first Chevalley-Eilenberg complex the Lie algebra structure on~$\Sigma\coLie\circ A \cong A \oplus \Sigma\overline{\coLie}\circ A$ is given by a central extension of the Lie algebra structure on~$A$ with the adjoint action of~$A$ on~$\Sigma\overline{\coLie}\circ A$.
The second Chevalley-Eilenberg complex is obtained from an identity true for any central extension of a Lie algebra.
\end{example}

\subsection{Application to post-Lie algebras}
\label{section_postlie}
The operad $\PostLie$ was introduced by Bruno Vallette in~\cite{Vallette2007} as the Koszul dual of another operad~$\ComTrias$.
The algebras for $\PostLie$ have an anti-symmetric product $[-,-]$ which satisfies the Jacobi identity along with another binary operation $-\cc -$ without symmetries which satisfies the identities
\[
a\cc [b,c] = (a\cc b)\cc c - a\cc(b\cc c) - (a\cc c)\cc b + a\cc(c\cc b)
\]
and 
\[
[a, b]\cc c = [a,b\cc c] + [a\cc c, b].
\]
Thus $\PostLie$ is a quadratic operad.
An example of a post-Lie algebra is given by taking a Lie algebra, $\lieg$ and defining $g\cc h = 0$ for all $g,h\in\lieg$.
These examples are induced by the map $f:\PostLie\rightarrow\Lie$ given by sending the product $\cc$ to~$0$.  
Thus we will denote such examples by $f^\ast \lieg$.
However richer examples are provided in~\cite{MuntheKaas2013} where it is shown that vector fields equipped with a flat connection with constant torsion form a post-Lie algebra.
The torsion provides the Lie bracket, while the connection provides the other product, $x\cc y = \nabla_y x$.

It was shown in~\cite{Dotsenko2011fdls} that $\PostLie$ possesses a filtered distributive law $(\Lie,\Mag,s,d)$, where $\Mag$ is the operad for magmatic algebras, which have a binary product with no symmetries and which satisfies no identities.
The maps $s$ and $d$ are defined like so
\[
s([a,[b,c]] + [b,[c,a]] + [c,[a,b]]) = 0
\]
and
\begin{align*}
d([a,b]\cc c) &= [a\cc c, b] + [a,b\cc c] \\
d(a\cc [b,c]) &= (a\cc b)\cc c - a\cc(b\cc c) - (a\cc c)\cc b + a\cc(c\cc b).
\end{align*}
The Koszul dual operad $\ComTrias$ has a symmetric product $\co$ along with a product $\ca$ with no symmetries.
We will define it by the filtered distributive law $(\Nil, \Com, s',d')$ where $\Nil$ is the operad of nilpotent algebras which is generated by a product with no symmetries such that any multiplication involving three elements is equal to~0.
This is the Koszul dual operad of $\Mag$.
The map $s'$ is defined by
\[
s'(a\ca (b\ca c)) = s'((a\ca b)\ca c) = s'(a\ca(c\ca b)) = s'((a\ca c)\ca b) = a\ca(b\co c),
\]
while $d'$ is defined by
\begin{equation}
\label{eq_dComTrias}
d'(a\co (b\ca c)) = (a\co b)\ca c.
\end{equation}
The image of $d'$ is contained in $\Nil\circ\Com$ and hence the filtered distributive law for $\ComTrias$ is semi-filtered and the law for $\PostLie$ is op-semi-filtered.
We may therefore apply part (II) of Theorem~\ref{thm_cohomologyhodge} to the map $f:\PostLie\rightarrow\Lie$ to find that
for any Lie algebra $\lieg$, the cohomology of $\lieg$ viewed as a $\PostLie$-algebra decomposes:
let $M$ be a module for $\lieg$ then
\[
H_{\PostLie}^\ast(f^\ast\lieg, f^\ast M) \cong H^\ast_\Lie(\lieg,M) \oplus H^\ast(\Hom_{\UU_{\Lie}\lieg}(\LL_\Lie^{\Mag^\antis(2)\circ\Lie^\antis}\lieg, M)).
\]
There are only two terms because $\Mag^\antis \cong \susp\Nil \cong \CI\oplus sk\sym_2$ as an $\sym$-module.
Thus $\PostLie^\antis$ decomposes as $\Lie^\antis \oplus sk\sym_2 \circ \Lie^\antis$.
The first term is $\Lie^\antis$ and thus contributes the copy of the cohomology of $\lieg$ as a Lie algebra.
Examining the second factor further, the cotangent complex of this comodule has the form
\begin{equation}
\label{eq_Nilmodule}
\UU_\Lie\lieg \otimes_k \bigl( sk\sym_2 \circ \Lie^\antis \circ_\kappa \lieg \bigr),
\end{equation}
with an added differential which we will come to later.  Since $\Lie^\antis \cong \susp\Com$ as an $\sym$-module then $\Lie^\antis\circ_\kappa\lieg$ is the familiar Chevalley-Eilenberg complex $s^{-1}\bigwedge s\lieg$ used to define the Lie algebra (co)homology.
The effect of $sk\sym_2\circ(-)$ is to give two copies of this complex, hence as a graded module~\eqref{eq_Nilmodule} is isomorphic to 
\[
s^{-1}\UU_\Lie\lieg \otimes_k \bigwedge s\lieg \otimes_k \bigwedge s\lieg.
\]
We must now define the additional differential.  This makes use of the left comodule structure of $\PostLie^\antis$ given by taking the linear dual of~\eqref{eq_dComTrias}.
Since the image of $d'$ only involves the symmetric product of the left hand element the added differential on a general element of \eqref{eq_Nilmodule} is given by
\begin{multline}
d^l( 1 \otimes_k (g_1\wedge \ldots \wedge g_k)\otimes_k (h_1\wedge\ldots\wedge h_l)) = \\
\sum_{i=1}^k (-1)^{i+\beta_i-1} g_i\otimes (g_1\wedge \ldots \widehat{g_i} \ldots\wedge g_k) \otimes (h_1\wedge\ldots\wedge h_l),
\end{multline}
where $\beta_i$ is given by the sum of the degrees of the elements $g_1,\ldots,g_{i-1}\in\lieg$ and the notation $\widehat{g_i}$ means that the element $g_i$ is removed.
So along with the differential contributed by the two copies of $\Lie^\antis \circ_\kappa\lieg$ we find that $\LL_\Lie^{\Mag^\antis(2)\circ\Lie^\antis}\lieg$ is isomorphic as a left $\UU_\Lie\lieg$-module to $\LL_\Lie\lieg \otimes_k sB_\Lie(\lieg)$.

After a small amount of rearranging,
\[
H^\ast_\PostLie ( f^\ast \lieg, f^\ast M )
\cong H_\Lie^\ast (\lieg, M) \oplus \Hom_k(sB_\Lie(\lieg), H_\Lie^\ast(\lieg,M)).
\]
There is a corresponding result for homology with coefficients in a right $\UU_\Lie\lieg$-module $N$,
\[
H_\ast^\PostLie ( f^\ast \lieg, f^\ast N )
\cong H^\Lie_\ast (\lieg, N) \oplus sB_\Lie(\lieg)\otimes_k H^\Lie_\ast(\lieg,N).
\]
For a free Lie algebra \(\lieg = \Lie \circ V\) generated by a vector space~\(V\), this homology evaluates as \(V\otimes N \oplus sV\otimes V\otimes N\) which agrees with \(N\otimes \Mag^\antis \circ V\) given by Theorem~\ref{thmFree}.

\section{Equivalence with the Hodge-like decomposition of the Hochschild (co)homology of a commutative algebra}
\label{section_equivalence}
The classical Hodge-like decomposition of Hochschild homology or cohomology was described in terms of Eulerian idempotents~\cite{Gerstenhaber1987}, which were first used in this context by Barr~\cite{Barr1968} to study the Harrison (co)homology.
The Eulerian idempotents are elements of the symmetric group algebra; these act on the Hochschild (co)complex when the algebra is commutative; the classical approach is to show that they commute with the (co)differential and hence decompose the Hochschild (co)homology into the eigenspaces of the Eulerian idempotents.

In order to show that the classical decomposition can be recovered using our techniques we will show that in fact the Eulerian idempotents act on the $\sym$-module~$\As$ underlying the associative operad and commute with the $\Lie$-bimodule structure.
As such the Eulerian idempotents can be used to decompose the associative operad into a direct sum of $\Lie$-bimodules.  
We will show that this is the same direct sum as acheived by the theory of filtered distributive laws, see Example~\ref{ex_aslie}.

\newcommand{\sh}{{sh}}
The classical approach is as follows.
Take a subset $A\subseteq\sbrac{n}$ and write the elements in order as
\[
A = \cbrac{i_1 < \ldots < i_p}
\]
and the elements of the complement as
\[
\sbrac{n} \setminus A = \cbrac{j_1 < \ldots < j_q}.
\]
Define an element $\sigma_A\in\sym_n$ of the symmetric group on $n$ letters by
\[
\sigma_A(k) = \begin{cases}
i_k & \text{ if $k \leq p$,}\\
j_{k-p} & \text{ if $k > p$.}
\end{cases}
\]
Such an element is called a \emph{$(p,q)$-shuffle}.  Let $\sh_n\in k\sym_n$ be the element
\[
\sh_n = \sum_{A\subseteq\sbrac{n}} \sigma_A.
\]
For example $\sh_3$ is the sum of $2^3=8$ elements
\[
4[123] + [213] + [231] + [312] + [132].
\]
There is a left action of $k\sym_n$ on the $n$th group of the Hochschild cochain complex
\[
CC^n(A,M) \;\cong\; \Hom_k(A^{\otimes n}, M),
\]
which calculates the Hochschild cohomology of the associative algebra $A$ with coefficients in the $A$-bimodule $M$.
Barr~\cite{Barr1968} proved that when $A$ is commutative and $M$ is a symmetric bimodule the element $\sh_n$ is compatible with the Hochschild codifferential,
\[
\delta\sh_n = \sh_{n+1}\delta.
\]
Thus $\oplus_{n\geq 0} \sh_n$ acts on the Hochschild cochain complex.
In~\cite{Gerstenhaber1987} it is shown that each $\sh_n$ is diagonalisable and the eigenvalues of $\sh_n$ are determined to be $2^i$ for $i=1,\ldots,n$.
Writing $CC^n_{(i)}(A,M)$ for the $2^i$-eigenspace we get a decomposition
\[
CC^n(A,M) \cong CC^n_{(1)}(A,M) \oplus \ldots \oplus CC^n_{(n)}(A,M),
\]
or on the whole complex
\[
CC^\ast(A,M) \cong \bigoplus_{i\geq 1} CC^\ast_{(i)}(A,M).
\]
This is the Hodge-type decomposition studied in~\cite{Gerstenhaber1987}, there is also an analogue for the Hochschild homology, as well as cyclic homology.  There are other approaches to the decomposition, for example using Hopf algebras, see the approach of~\cite{Loday1998} where this is explained under the term $\lambda$-decomposition.

\subsection{Interpretation in terms of $\Lie$-bimodules}
We now show that the action of $\sh_n$ can be used to decompose the associative operad $\As$ in the category of $\Lie$-bimodules.
For any $\sym$-module $\CA$, there is a right $k\sym_n$-action on $\CA(n)$.  However for the associative operad $\As(n)\cong k\sym_n$ there is also a compatible left $k\sym_n$-action.
Thus the element $\sh_n\in k\sym_n$ acts on the $\sym$-module $\As(n)$ by left multiplication and in particular $\sh:=\oplus_{i\geq 1} \sh_i$ is a morphism of $\sym$-modules $\As\rightarrow\As$.

Via the inclusion $\Lie\rightarrow\As$ the associative operad becomes a $\Lie$-bimodule.
\begin{proposition}
\label{prop_shAs}
The morphism of $\sym$-modules $\sh:\As\rightarrow\As$ is also a morphism of $\Lie$-bimodules.
\end{proposition}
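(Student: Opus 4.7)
The plan is to reduce the claim to a pair of combinatorial identities in $k\sym_{n+1}$ and verify them by an argument parallel to Barr's classical shuffle identity~\cite{Barr1968}.

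First, since $\Lie$ is generated as an operad by the bracket $l_2\in\Lie(2)$, and the $\Lie$-bimodule structure on $\As$ is inherited from operadic composition in $\As$ via the inclusion $\Lie\hookrightarrow\As$ (under which $l_2$ maps to $e-(12)\in k\sym_2$), the structure maps $\Lie\cco\As\to\As$ and $\As\cco\Lie\to\As$ are determined by their restriction to the $l_2$ summand. Combined with the associativity pentagons of the bimodule structure, this implies by induction on arity that $\sh$ is a $\Lie$-bimodule morphism if and only if, for every $\sigma\in\sym_n$,
\begin{align*}
\sh_{n+1}\cdot(l_2\circ_j\sigma) &= l_2\circ_j(\sh_n\cdot\sigma), & j&\in\{1,2\},\\
\sh_{n+1}\cdot(\sigma\circ_j l_2) &= (\sh_n\cdot\sigma)\circ_j l_2, & j&\in\{1,\ldots,n\},
\end{align*}
where $\circ_j$ is the operadic composition in $\As$ and $\sh_n$ acts on $\As(n)=k\sym_n$ by left multiplication.

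Expanding $l_2=e-(12)$ and writing out the operadic compositions as explicit elements of $k\sym_{n+1}$, each side of each identity becomes a signed sum of shuffle permutations, indexed by subsets of $[n+1]$ together with an insertion position. I would verify the identities by producing a sign-preserving bijection between the shuffles on the two sides, tracking the position of the element or transposition introduced by the composition with $l_2$. This is essentially the combinatorial manipulation Barr used to establish $\delta\sh_n=\sh_{n+1}\delta$ on the Hochschild cochain complex $CC^\ast(A,M)$ for commutative $A$ and symmetric $M$: the restricted Hochschild coboundary in that setting is assembled precisely from the left and right $l_2$-actions above, so one may alternatively deduce the proposition from Barr's theorem by a universal-example argument applied to $A=k[x_1,x_2,\ldots]$ with a generic symmetric bimodule.

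The main obstacle is the combinatorial bookkeeping, whether carried out directly as a shuffle bijection or by translating Barr's cochain-level argument into the operadic framework. No deeper input beyond Barr's classical combinatorics is needed; the proposition is, in essence, that identity recast as a statement about the $\sym$-module $\As$ and the Lie sub-operad it contains.
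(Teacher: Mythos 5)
Your proposal takes essentially the same route as the paper: reduce the claim to identities in $k\sym_{n+1}$ obtained by composing with $l_2=e-(12)$, and verify them by the shuffle combinatorics underlying Barr's identity $\delta\sh_n=\sh_{n+1}\delta$. The paper makes two economies worth noting --- since $\sh$ acts by left multiplication it commutes with the right $\sym_n$-action, so one only needs the identity for the generator $m_n$ rather than for every $\sigma$, and the left-action compatibility follows from the right-action one via the derivation identity $l\cco a=\sum_{j} a\circ_j l\,.(12\ldots j)$ --- and it carries out the bijection you defer to: the shuffles $\sigma_A$ of $[n+1]$ satisfying $\sigma_A^{-1}(i+1)=\sigma_A^{-1}(i)+1$ are exactly those arising (uniquely) as $\sigma_B\circ_i m_2$, and $t_i$ pairs off the remaining shuffles, so $t_i$ fixes the difference $\sh_{n+1}-\sum_{B}\sigma_B\circ_i m_2$ and hence $e-t_i$ annihilates it.
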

This will be proved below, but first we discuss its implications.
In characteristic~0, the eigenvalues of $\sh_m\in k\sym_m$ are all of the form $2^n$, so $\sh$ decomposes $\As$ into $2^n$-eigenspaces, each of which is a $\Lie$-bimodule.
To determine the eigenspaces, let $s_n = \frac{1}{n!}\sum_{\sigma\in\sym_n}\sigma$ be the averaged sum of all the elements of $\sym_n$.
This spans the trivial representation in $\As(n)$ and it is easily seen that $\sh_n s_n = 2^n s_n$.  
Thus $s_n$ is an element of the $2^n$-eigenspace for each $n$, as is any composition of $s_n$ with an element of $\Lie$.
Since $\As$ satisifies a filtered distributive law $(\Com, \Lie, s, d)$ with a splitting of $\sym$-modules $\Com\rightarrow\As$ picking out the elements $s_n$, we know that $\As$ is generated as a right $\Lie$-module by the elements $s_n$.
Hence the $2^n$-eigenspaces are generated by $s_n$ and furthermore the decomposition arising from the $\sh$ action agrees with the filtered distributive law decomposition.

\begin{proof}[Proof of Proposition~\ref{prop_shAs}]
As a $k\sym_n$-module $\As(n)$ is generated by $m_n$, the element describing the multiplication of $n$ ordered elements in an associative algebra.
We must show that for each $i$
\begin{equation}
\label{eq_Rlaction}
(\sh_n m_n)\circ_i l = \sh_{n+1}(m_n\circ_i l)
\end{equation}
and 
\[
l\cco (\sh_n m_n) = \sh_{n+1} (l\cco m_n),
\]
where $a\circ_i b$ is the element given by composing element $b$ in the $i$th position of element $a$ within the operad $\As$.
The second equation follows from the first since
\[
l\cco a = \sum_{j=1}^n a\circ_j l .(12\ldots j)
\]
for any element $a\in\As(n)$.  This is just the statement that within an associative algebra the adjoint action of some $a$ acts by derivations.

To show~\eqref{eq_Rlaction} we reduce the problem to calculations within the group algebra $k\sym_n$. The element $m_n$ corresponds to the identity $1\in k\sym_n$.
So the left hand side can be written as
\begin{equation}
(\sh_n m_n)\circ_i l= \sum_{A\subseteq \sbrac{n}} \sigma_A \circ_i l = \sum_{A\subseteq\sbrac{n}}(\sigma_A\circ_i m_2) (e - t_i)), 
\end{equation}
where $t_i$ is the transposition exchanging $i$ and $i+1$.
The right hand side of~\eqref{eq_Rlaction} is easily described since $m_n \circ_i l = (m_n\circ_i m_2)(e-t_i) = m_{n+1}(e-t_i)$, so the right hand side becomes $\sh_{n+1}(e-t_i)$.
To show the equality~\eqref{eq_Rlaction} we will show that $t_i$ acts trivially on the difference
\begin{equation}\label{eq_shuffledifference}
\sum_{A\subseteq\sbrac{n+1}} \sigma_A - \sum_{B\subseteq\sbrac{n}}\sigma_B\circ_i m_2.
\end{equation}
The action of $(-\circ_i m_2)$ on the shuffle $\sigma_B\in\sym_n$ serves to duplicate either a strand $i_p$ when $i\in B$ and $\sigma_B(p) = i$, or a strand $j_q$ when $i\notin B$ and $\sigma_B(q+\mbrac{B})=i$.
In either case $\sigma_B\circ_i m_2=\sigma_{B'}$ is still a shuffle with $B'=\brac{i_1<\ldots < i_p<i_p+1<\ldots< i_{\mbrac{B}}+1}$ in the first case and $B'=\brac{i_1<\ldots < i_k<i_{k+1}+1<\ldots< i_{\mbrac{B}}+1}$ in the second case where $k$ is such that $i_k < j_q < i_{k+1}$.

The set of shuffles obtained in this way are those $\sigma_A$ such that $\sigma_A^{-1}(i+1) = \sigma_A^{-1}(i)+1$, or equivalently the set of shuffles $\sigma_A$ such that either both $\sigma_A^{-1}(i),\sigma_A^{-1}(i+1)\in A$ or both $\sigma_A^{-1}(i),\sigma_A^{-1}(i+1)\notin A$.
Furthermore each of these shuffles can be obtained uniquely in the form $\sigma_B\circ_i m_2$.
Thus~\eqref{eq_shuffledifference} is equal to a sum of shuffles
\begin{equation}
\sum_{\substack{\sigma_A^{-1}(i)\in A\subseteq\sbrac{n+1} \\ 
\sigma_A^{-1}(i+1)\notin A}}\sigma_A
\quad + \sum_{\substack{\sigma_A^{-1}(i+1)\in A\subseteq\sbrac{n+1} \\
\sigma_A^{-1}(i)\notin A}}\sigma_A
\end{equation}
The action of $t_i$ exchanges the two summations and so acts trivially on~\eqref{eq_shuffledifference}.
\end{proof}

We have seen that the shuffle operator $\sh$ can be used to split the associative operad up into a direct sum of $\Lie$-bimodules.
Taking linear duals, an operator $\sh^\ast$ splits the coassociative cooperad into a direct sum of $\coLie$-comodules.
Applying Part~(II) of Theorem~\ref{thm_cohomologyhodge} gives the decompososition of the cohomology 
\[
H_\As^\ast(A,M) \cong H^\ast(\Hom_{\UU_\Com A}(\LL_{\Com}^{\susp\coAs}A, M))
\]
 for a commutative algebra~$A$ with symmetric bimodule~$M$.  
The image of the action of $\susp\sh^\ast:\susp\coAs\rightarrow\susp\coAs$ on the cohomology agrees with the classical action described by Barr~\cite{Barr1968}.
This shows that the classical decomposition and the decomposition achieved using the theory of operadic comodules agree.

\subsection{Extension to zinbiel algebras}
\label{section_dendzinb}
We now apply our results to the following square written alongside its Koszul dual
\[
\xymatrix{\Dend \ar[r] & \Zinb & & \Sigma\coDias \ar[r] & \Sigma\coLeib \\
 \As \ar[u]\ar[r] & \Com\ar[u] && \Sigma\coAs\ar[u] \ar[r] & \Sigma\coLie.\ar[u]}
\]
A decomposition for the dendriform (co)homology of a zinbiel algebra was described in~\cite{Goichot}.
We will recover this using the results of~\cite{Chapoton2001endo}.

The operad map $\Dend\rightarrow\Zinb$ is the Manin black product of $\As\rightarrow \Com$ with the pre-Lie operad, $\PLie$.
In the Koszul dual picture we have that $\coDias\rightarrow\coLeib$ is obtained from $\coLie\rightarrow\coAs$ by applying the Manin white product with $\coPerm$, the Koszul dual of $\PLie$, as expected.
But the white product of an operad with $\Perm$ is isomorphic to the Hadamard product with $\Perm$ and the same is true in the cooperad picture.

We now take advantage of the fact that if $\CM$ is a $\CC$-comodule for a cooperad $\CC$ then $\CM \otimes \coPerm$ is a $(\CC\!\otimes\!\coPerm)$-comodule.  
So writing $\Sigma\coAs$ in the category of $\coLie$-comodules as
\[
\coAs \cong \CM_1 \oplus \CM_2 \oplus \ldots,
\]
where $\CM_i\cong \coCom(i)\circ \coLie$,
we have a decomposition
\[
\coDias \cong \coAs \otimes \coPerm \cong \CM_1\otimes\coPerm \oplus \CM_2\otimes\coPerm \cong \ldots
\]
which translates to the decomposition of the (co)homology of a zinbiel algebra viewed as a dendriform algebra as described in~\cite{Goichot}.
In more generality if an operad $\CO$ is equal to the Manin black product of an operad $\CP$ and the operad $\PLie$ we may apply the same ideas to transfer results from $\CP$ to $\CO$.
\bibliographystyle{plain}
\bibliography{library}

\end{document}